\newtheorem{thm}{Theorem}
\newtheorem{lem}{Lemma}
\newtheorem{prop}{Proposition}
\newtheorem{defi}{Definition}
\newtheorem{rem}{Remark}
\newtheorem{assump}{Assumption}
\newcommand{\Rmnum}[1]{\expandafter\@slowromancap\romannumeral #1@}
\begin{document}
\title{Dynamic Sharing Through the ADMM}
\author{Xuanyu Cao and K. J. Ray Liu, \emph{Fellow, IEEE}\\
Email: \{apogne, kjrliu\}@umd.edu\\
Department of Electrical and Computer Engineering, University of Maryland, College Park, MD\\}
\maketitle
\begin{abstract}
In this paper, we study a dynamic version of the sharing problem, in which a dynamic system cost function composed of time-variant local costs of subsystems and a shared time-variant cost of the whole system is minimized. A dynamic alternating direction method of multipliers (ADMM) is proposed to track the varying optimal points of the dynamic optimization problem in an online manner. We analyze the convergence properties of the dynamic ADMM and show that, under several standard technical assumptions, the iterations of the dynamic ADMM converge linearly to some neighborhoods of the time-varying optimal points. The sizes of these neighborhoods depend on the drifts of the dynamic objective functions: the more drastically the dynamic objective function evolves across time, the larger the sizes of these neighborhoods. We also investigate the impact of the drifts on the steady state convergence behaviors of the dynamic ADMM. Finally, two numerical examples, namely a dynamic sharing problem and the dynamic least absolute shrinkage and selection operator (LASSO), are presented to corroborate the effectiveness of the proposed dynamic ADMM. It is observed that the dynamic ADMM can track the time-varying optimal points quickly and accurately. For the dynamic LASSO, the dynamic ADMM has competitive performance compared to the benchmark offline optimizor while the former possesses significant computational advantage over the latter.
\end{abstract}

\begin{IEEEkeywords}
Dynamic optimization, the sharing problem, alternating direction method of multipliers
\end{IEEEkeywords}

\section{Introduction}

Many signal processing and resource allocation problems can be posed as an optimization problem which aims at minimizing a system cost consisting of local costs of subsystems and a shared cost of the whole system. For instance, consider a power system divided into multiple subsystems depending on either the geographical locations or the power line connections \cite{gan2013optimal}. On one hand, if a subsystem receives some amount of power supplies, the consumption or storage of these supplies enables the subsystem to gain some utility. On the other hand, the generation of the total power supplies of all the subsystems incurs some cost for the whole power system due to factors such as the consumption of natural resources, the pollution and the human efforts. The goal of the designer or controller of the power system is to maximize the social welfare or minimize the overall system cost including the negative of the total utilities of all the subsystems and the power generation cost of the whole system. This structure of local costs plus shared common cost arises in many applications such as smart grids, communication networks, or more generally, signal processing and control in multi-agent systems. Optimization problems with such structure are called the sharing problems as there is a term of the shared cost of the whole system in the objective function \cite{boyd2011distributed}.

One implicit assumption of the conventional sharing problem is that both the local cost functions and the shared cost function are static, i.e., they do not vary with time. However, in practice, the cost or utility functions of many applications are intrinsically time-varying. For example, in power grids, the utility functions of the subsystems vary across time as the power users' demands evolve, e.g., the demands climax during evening and decline between midnight and early morning. The generation cost of the power system also varies with time owing to the changing and somewhat unpredictable renewable energy sources (e.g., wind and solar energy) as well as the fluctuation of the market prices of the traditional energy. As another example, in real-time signal processing or online learning of multi-agent systems, the data stream arrive sequentially as opposed to in a single batch. This also makes the cost functions involved in the estimators or learners vary with time. Therefore, we are motivated to study a dynamic version of the sharing problem in this paper.

In the literature, dynamic optimization problems arise in various research fields and have been studied from different perspectives. In adaptive signal processing such as the recursive least squares (RLS), the input/output data arrive sequentially, resulting in a time-varying objective function (the discounted total squared errors) to be minimized \cite{Haykin:1996:AFT:230061}. The RLS algorithm is able to track the unknown time-variant weight vectors relating the input and output data in real time. More recently, the concept of adaptive signal processing has been extended to adaptive networks, leading to dynamic distributed optimization problems over networked systems \cite{sayed2014adaptive,jiang2013distributed}. Another line of research for dynamic optimization is online convex optimization (OCO) \cite{hazan2016introduction,hall2015online,mahdavi2012trading,koppel2015saddle,flaxman2005online,zinkevich2003online}. In OCO, the time-varying cost functions are unknown a-priori and the goal is to design online algorithms with low (e.g., sublinear) regrets, i.e., the solution from the algorithms are not too worse than the optimal offline benchmarks. More broadly speaking, online learning (e.g., the weighted majority algorithm and the multiplicative weight update method) \cite{littlestone1989weighted,arora2012multiplicative,kivinen1997exponentiated,tekin2016adaptive,tekin2015distributed,tekin2014distributed} and (stochastic) dynamic control/programming (e.g., Markov decision processes) \cite{puterman2014markov,bertsekas1976dynamic,altman1999constrained} also lie in the category of dynamic optimizations, though their problem formulations are very distinct from that of this paper.

To solve the dynamic sharing problem in an online manner, in this paper, we present a dynamic ADMM algorithm. As a dual domain method, the ADMM is superior to its primal domain counterparts such as the gradient descent method in terms of convergence speed. Due to its broad applicability, the ADMM has been exploited in various signal processing and control problems including distributed estimation \cite{ling2010decentralized}, decentralized optimization \cite{shi2014linear}, wireless communications \cite{shen2012distributed}, power systems \cite{zhang2016admm} and multi-agent coordination \cite{chang2014proximal}. While most existing works only consider static ADMM in which the objectives and constraints are time-invariant, a few recent studies have investigated the ADMM in a dynamic scenario. When the time-varying objective functions are unknown a-priori, an online ADMM algorithm is proposed in \cite{hosseini2014online} to generate solutions with low regrets compared to the optimal static offline solution. This online ADMM is not directly applicable to many dynamic sharing problems in which the goal is to track the time-varying optimal points and a static offline benchmark is not very meaningful. Additionally, several stochastic ADMM algorithms \cite{ouyang2013stochastic,zhong2014fast,suzuki2013dual} have been proposed to solve stochastic programs iteratively using sequential samples. Though the stochastic ADMM operates in a time-varying manner as the new samples arrive sequentially, the underlying stochastic program itself does not change over time, which makes the problem setup very distinct from the dynamic sharing problem considered here. A more closely related work is \cite{ling2014decentralized}, in which a dynamic ADMM algorithm is applied to the consensus optimization problems. However, the convergence analysis of the dynamic ADMM in \cite{ling2014decentralized} significantly relies on the special structure of the consensus optimization problems, in which all agents share the same decision variable. This leaves the performance of the dynamic ADMM in other optimization scenarios largely unknown.

Our goal in this work is to investigate the convergence behaviors of the dynamic ADMM for the dynamic sharing problem both theoretically and empirically. The main contributions of this paper can be summarized as follows.

\begin{itemize}
\item We motivate and formally formulate the dynamic sharing problem (Problem \eqref{dynamic_share}). A dynamic ADMM algorithm (Algorithm 1) is proposed for a more general dynamic optimization problem (Problem \eqref{dynamic_admm}), which encompasses the dynamic sharing problem as a special case. The dynamic ADMM can adapt to the time-varying cost functions and track the optimal points in an online manner.
\item We analyze the convergence properties of the proposed dynamic ADMM algorithm. We show that, under standard technical assumptions, the dynamic ADMM converges linearly to some neighborhoods of the time-varying optimal points. The sizes of the neighborhoods are related to the drifts of the dynamic optimization problem: the more drastically the dynamic problem evolves with time, the larger the sizes of the neighborhoods. We also study the impact of the drifts on the steady state convergence behaviors of the dynamic ADMM.
\item Two numerical examples are presented to validate the effectiveness of the dynamic ADMM algorithm. The first example is a dynamic sharing problem while the second one is the dynamic least absolute shrinkage and selection operator (LASSO). We observe that the dynamic ADMM can track the time-varying optimal points quickly and accurately. For the dynamic LASSO, the dynamic ADMM has competitive performance compared to the benchmark offline optimizor while the former is computationally superior to the latter dramatically.
\end{itemize}

The remaining part of this paper is organized as follows. In Section \Rmnum{2}, the dynamic sharing problem is formally defined and a dynamic ADMM algorithm is proposed. In Section \Rmnum{3}, theoretical analysis of the convergence properties of the dynamic ADMM is presented. Two numerical examples are shown in Section \Rmnum{4}, following which we conclude this work in Section \Rmnum{5}.

\section{Problem Statement and Algorithm Development}

In this section, we first formally state the dynamic sharing problem and give some examples and motivations for it. Then, we present some rudimentary knowledge of the standard static ADMM. Finally, we develop a dynamic ADMM algorithm for a more general dynamic optimization problem, which encompasses the dynamic sharing problem as a special case.

\subsection{The Statement of the Problem}
Consider the sharing problem \cite{boyd2011distributed}:
\begin{align}\label{share}
\text{Minimize}~~\sum_{i=1}^nf^{(i)}\left(\mathbf{x}^{(i)}\right)+g\left(\sum_{i=1}^n\mathbf{x}^{(i)}\right),
\end{align}
with variables $\mathbf{x}^{(i)}\in\mathbb{R}^p$, $i=1,...,n$, where $f^{(i)}:\mathbb{R}^p\mapsto\mathbb{R}$ is the local cost function of subsystem $i$ and $g:\mathbb{R}^p\mapsto\mathbb{R}$ is the global cost function of some commonly shared objective of all subsystems. The global cost function $g$ takes the sum of all $\mathbf{x}^{(i)}$ as its input argument. The sharing problem \eqref{share} is a canonical problem with broad applications in resource allocation and signal processing \cite{boyd2011distributed}. One limitation of the problem formulation in \eqref{share} and its solution methods is that all the cost functions are static, i.e., they do not vary over time. This can be a major obstacle when the application is inherently time-variant and real-time, in which the cost functions change with time and online processing/optimization is imperative. In such circumstances, dynamic algorithms adaptive for the variation of the cost functions are more favorable. This motivates us to study a dynamic version of the sharing problem:
\begin{align}\label{dynamic_share}
\text{Minimize}~~\sum_{i=1}^nf_k^{(i)}\left(\mathbf{x}^{(i)}\right)+g_k\left(\sum_{i=1}^n\mathbf{x}^{(i)}\right),
\end{align}
where $k$ is the time index. $f_k^{(i)}:\mathbb{R}^p\mapsto\mathbb{R}$ is the local cost function of subsystem $i$ at time $k$ and $g_k:\mathbb{R}^p\mapsto\mathbb{R}$ is the global cost function of the shared objective at time $k$. The dynamic sharing problem in \eqref{dynamic_share} can be applied to many dynamic resource allocation problems, among which we name two in the following.
\begin{itemize}
\item Consider a power grid which is divided into $n$ power subsystems according to either geographical locations or power line connections. If subsystem $i$ receives $\mathbf{x}^{(i)}$ amount of power supplies at time $k$, then it gains a utility of $-f_k^{(i)}\left(\mathbf{x}^{(i)}\right)$ by either consuming or storing the supplies. In other words, $f_k^{(i)}$ is the negative of the utility function of power subsystem $i$ at time $k$. The utility function is time-variant because users often have different power demands at different time, e.g., 6-11pm may be the peak demand period while 2-6am may be a low demand period. On the other hand, the generation of the total power supplies of $\sum_{i=1}^n\mathbf{x}^{(i)}$ can incur a cost of $g_k\left(\sum_{i=1}^n\mathbf{x}^{(i)}\right)$ for the power generator due to resource consumptions, human efforts and pollution. The generation cost function $g_k$ also varies across time owing to factors such as the changing and somewhat unpredictable renewable energy sources and the variant prices of the traditional energy sources. Thus, the overall social welfare maximization problem can be posed as a dynamic sharing problem as in \eqref{dynamic_share}.
\item Consider a cognitive radio network composed of $n$ secondary users. If secondary user $i$ obtains spectrum resources $\mathbf{x}^{(i)}$ at time $k$, then her utility is $-f_k^{(i)}\left(\mathbf{x}^{(i)}\right)$. The negative utility function $f_k^{(i)}$ is time-variant because users have different spectrum demands as they change their wireless applications. For example, a user has a high spectrum demand if she is watching online videos. But if she is just making a phone call, her spectrum demand is small. Moreover, for the network operator to provide the total spectrum resources of $\sum_{i=1}^n\mathbf{x}^{(i)}$, he needs to pay a cost of $g_k\left(\sum_{i=1}^n\mathbf{x}^{(i)}\right)$, in which the cost function $g_k$ is also time-varying due to factors including the uncertainty of the sensed temporarily unused spectrum by primary users and the changing market prices of the spectrum. As such, the overall system cost minimization problem can be casted into the form of dynamic sharing problem in \eqref{dynamic_share}.
\end{itemize}

A well-known method to decouple the local cost functions $f^{(i)}$ and the global cost function $g$ in the sharing problem \eqref{share} is the ADMM \cite{boyd2011distributed}. As a dual domain optimization method, the ADMM has faster convergence than the primal domain methods such as the gradient descent algorithm. This inspires us to develop and analyze a dynamic ADMM algorithm to solve the dynamic sharing problem in \eqref{dynamic_share} in this work. Before formal development of the algorithm, we first present a brief review of the basics of ADMM in the next subsection.

\subsection{Preliminaries of ADMM}
ADMM is an optimization framework widely applied to various signal processing applications, including wireless communications \cite{shen2012distributed}, power systems \cite{zhang2016admm} and multi-agent coordination \cite{chang2014proximal}. It enjoys fast convergence speed under mild technical conditions \cite{deng2016global} and is especially suitable for the development of distributed algorithms \cite{boyd2011distributed,bertsekas1989parallel}. ADMM solves problems of the following form:
\begin{eqnarray}\label{admm_prime}
\text{Minimize}_{\mathbf{x},\mathbf{z}} f(\mathbf{x})+g(\mathbf{z})~~\text{s.t.}~~\mathbf{Ax+Bz=c},
\end{eqnarray}
where $\mathbf{A}\in\mathbb{R}^{p\times n},B\in\mathbb{R}^{p\times m},c\in\mathbb{R}^p$ are constants and $\mathbf{x}\in\mathbb{R}^n,\mathbf{z}\in\mathbb{R}^m$ are optimization variables. $f:\mathbb{R}^n\mapsto\mathbb{R}$ and $g:\mathbb{R}^m\mapsto\mathbb{R}$ are two convex functions. The augmented Lagrangian can be formed as:
\begin{equation}
\mathfrak{L}_\rho(\mathbf{x,z,y})=f(\mathbf{x})+g(\mathbf{z})+\mathbf{y}^\mathsf{T}(\mathbf{Ax+Bz-c})+\frac{\rho}{2}\|\mathbf{Ax+Bz-c}\|_2^2,
\end{equation}
where $\mathbf{y}\in\mathbb{R}^p$ is the Lagrange multiplier and $\rho>0$ is some constant. The ADMM then iterates over the following three steps for $k\geq0$ (the iteration index):
\begin{eqnarray}
&&\mathbf{x}^{k+1}=\arg\min_\mathbf{x} \mathfrak{L}_\rho\left(\mathbf{x},\mathbf{z}^k,\mathbf{y}^k\right),\label{x_prime}\\
&&\mathbf{z}^{k+1}=\arg\min_\mathbf{z} \mathfrak{L}_\rho\left(\mathbf{x}^{k+1},\mathbf{z},\mathbf{y}^k\right),\label{z_prime}\\
&&\mathbf{y}^{k+1}=\mathbf{y}^{k}+\rho\left(\mathbf{Ax}^{k+1}+\mathbf{Bz}^{k+1}-\mathbf{c}\right).\label{multiplier_prime}
\end{eqnarray}
The ADMM is guaranteed to converge to the optimal point of \eqref{admm_prime} as long as $f$ and $g$ are convex \cite{boyd2011distributed,bertsekas1989parallel}. It is recently shown that global linear convergence can be ensured provided additional assumptions on problem \eqref{admm_prime} holds \cite{deng2016global}.

\subsection{Development of the Dynamic ADMM}

Define $\mathbf{x}=\left[\mathbf{x}^{(1)\mathsf{T}},...,\mathbf{x}^{(n)\mathsf{T}}\right]^\mathsf{T}\in\mathbb{R}^{np}$, $\mathbf{A}=[\mathbf{I}_p,...,\mathbf{I}_p]\in\mathbb{R}^{p\times np}$ and
\begin{align}\label{f_decompose}
f_k(\mathbf{x})=\sum_{i=1}^nf_k^{(i)}\left(\mathbf{x}^{(i)}\right).
\end{align}
Then, the dynamic sharing problem can be reformulated as:
\begin{align}
\label{dynamic_share_admm}\text{Minimize}_{\mathbf{x}\in\mathbb{R}^{np},\mathbf{z}\in\mathbb{R}^p}~~&f_k(\mathbf{x})+g_k(\mathbf{z})\\
\text{s.t.}~~&\mathbf{Ax-z=0}.
\end{align}
In the remaining part of this paper, we study the following more general dynamic optimization problem:
\begin{align}
\label{dynamic_admm}\text{Minimize}_{\mathbf{x}\in\mathbb{R}^N,\mathbf{z}\in\mathbb{R}^M}~~&f_k(\mathbf{x})+g_k(\mathbf{z})\\
\text{s.t.}~~&\mathbf{Ax+Bz=c},
\end{align}
where $f_k:\mathbb{R}^N\mapsto\mathbb{R}$ and $g_k:\mathbb{R}^{M}\mapsto\mathbb{R}$ are two functions and $\mathbf{A}\in\mathbb{R}^{M\times N},\mathbf{B}\in\mathbb{R}^{M\times M}$ are two matrices. The problem \eqref{dynamic_share_admm} is clearly a special case of the problem \eqref{dynamic_admm} by taking $N=np,M=p,\mathbf{B=-I},\mathbf{c=0}$ and $f_k$ decomposable as in \eqref{f_decompose}. To apply the ADMM, we form the augmented Lagrangian of the problem \eqref{dynamic_admm}:
\begin{align}
\mathfrak{L}_{\rho,k}(\mathbf{x,z},\boldsymbol{\lambda})=f_k(\mathbf{x})+g_k(\mathbf{z})+\boldsymbol{\lambda}^\mathsf{T}(\mathbf{Ax+Bz-c})+\frac{\rho}{2}\|\mathbf{Ax+Bz-c}\|_2^2,
\end{align}
where $\boldsymbol{\lambda}\in\mathbb{R}^M$ is the Lagrange multiplier and $\rho>0$ is some positive constant. Thus, applying the traditional static ADMM \eqref{x_prime}, \eqref{z_prime} and \eqref{multiplier_prime} to the dynamic augmented Lagrangian $\mathfrak{L}_{\rho,k}$, we propose a dynamic ADMM algorithm, as specified in Algorithm 1. The main difference between the dynamic ADMM in Algorithm 1 and the traditional static ADMM described in subsection \Rmnum{2}-B is that the functions $f_k$ and $g_k$ varies across iterations of the ADMM. The aim of this paper is to study the impact of these varying functions on the ADMM algorithm. Lastly, we introduce the following two linear convergence concepts which shall be used later.
\begin{defi}
A sequence $\mathbf{s}_k$ is said to converge \underline{Q-linearly} to $\mathbf{s}^*$ if there exists some constant $\theta\in(0,1)$ such that $\|\mathbf{s}_{k+1}-\mathbf{s}^*\|_2\leq\theta\|\mathbf{s}_k-\mathbf{s}^*\|_2$ for any positive integer $k$.
\end{defi}
\begin{defi}
A sequence $\mathbf{v}_k$ is said to converge \underline{R-linearly} to $\mathbf{v}^*$ if there exists a positive constant $\tau>0$ and some sequence $\mathbf{s}_k$ Q-linearly converging to some point $\mathbf{s}^*$ such that $\|\mathbf{v}_k-\mathbf{v}^*\|_2\leq\tau\|\mathbf{s}_k-\mathbf{s}^*\|_2$ for every positive integer $k$.
\end{defi}

\begin{algorithm}[!htbp]
\renewcommand{\algorithmicrequire}{\textbf{Inputs:} }
\renewcommand\algorithmicensure {\textbf{Outputs:} }
\caption{The dynamic ADMM algorithm for the dynamic problem \eqref{dynamic_admm}}
\begin{algorithmic}[1]
\STATE \texttt{Initialize $\mathbf{x}_0=\mathbf{0},\mathbf{z}_0=\boldsymbol{\lambda}_0=\mathbf{0},k=0$
\STATE \underline{Repeat}:
\STATE $k\leftarrow k+1$
\STATE Update $\mathbf{x}$ according to:
\begin{align}\label{x_update}
\mathbf{x}_k=\arg\min_\mathbf{x}f_k(\mathbf{x})+\boldsymbol{\lambda}_{k-1}^\mathsf{T}\mathbf{Ax}+\frac{\rho}{2}\|\mathbf{Ax+Bz}_{k-1}-\mathbf{c}\|_2^2.
\end{align}
\STATE Update $\mathbf{z}$ according to:
\begin{align}\label{z_update}
\mathbf{z}_k=\arg\min_\mathbf{z}g_k(\mathbf{z})+\boldsymbol{\lambda}_{k-1}^\mathsf{T}\mathbf{Bz}+\frac{\rho}{2}\|\mathbf{Bz+Ax}_k-\mathbf{c}\|_2^2.
\end{align}
\STATE Update $\boldsymbol{\lambda}$ according to:
\begin{align}\label{lambda_update}
\boldsymbol{\lambda}_k=\boldsymbol{\lambda}_{k-1}+\rho(\mathbf{Ax}_k+\mathbf{Bz}_k-\mathbf{c}).
\end{align}
}
\end{algorithmic}
\end{algorithm}

\section{Convergence Analysis}
In this section, convergence analysis for the dynamic ADMM algorithm, i.e., Algorithm 1, is conducted. We first make several standard assumptions for algorithm analysis. Then, we show that the iterations of the dynamic ADMM converge linearly (either Q-linearly or R-linearly) to some neighborhoods of their respective optimal points (Theorem 1 and 2). The sizes of these neighborhoods depend on the \emph{drift} (to be formally defined later) of the dynamic optimization problem \eqref{dynamic_admm}. Finally, we demonstrate the impact of the drift of the dynamic optimization problem \eqref{dynamic_admm} on the steady state convergence properties of the dynamic ADMM.

\subsection{Assumptions}
Throughout the convergence analysis, we make the following assumptions on the functions $f_k$ and $g_k$, all of which are standard in the analysis of optimization algorithms \cite{boyd2004convex,shi2014linear,deng2016global}.
\begin{assump}
For any positive integer $k$, $g_k$ is strongly convex with constant $m>0$ ($m$ is independent of $k$), i.e., for any positive integer $k$:
\begin{align}\label{g_strong}
\left(\nabla g_k(\mathbf{z})-\nabla g_k(\mathbf{z}')\right)^\mathsf{T}(\mathbf{z-z'})\geq m\|\mathbf{z}-\mathbf{z}'\|_2^2,~~\forall\mathbf{z,z'}\in\mathbb{R}^M.
\end{align}
\end{assump}
\begin{assump}
For any positive integer $k$, $f_k$ is strongly convex with constant $\widetilde{m}>0$ ($\widetilde{m}$ is independent of $k$), i.e., for any positive integer $k$:
\begin{align}\label{f_strong}
\left(\nabla f_k(\mathbf{x})-\nabla f_k(\mathbf{x}')\right)^\mathsf{T}(\mathbf{x-x'})\geq \widetilde{m}\|\mathbf{x}-\mathbf{x}'\|_2^2,~~\forall\mathbf{x,x'}\in\mathbb{R}^N.
\end{align}
\end{assump}
\begin{assump}
For any positive integer $k$, $\nabla g_k$ is Lipschitz continuous with constant $L>0$ ($L$ is independent of $k$), i.e., for any positive integer $k$ and any $\mathbf{z,z'}\in\mathbb{R}^M$:
\begin{align}
\|\nabla g_k(\mathbf{z})-\nabla g_k(\mathbf{z}')\|_2\leq L\|\mathbf{z-z}'\|_2.
\end{align}
\end{assump}
We note that, if $g_k$ is twice differentiable, the condition \eqref{g_strong} is equivalent to $\nabla^2g_k(\mathbf{z})\succeq m\mathbf{I},\forall\mathbf{z}$. Similarly, if $f_k$ is twice differentiable, the condition \eqref{f_strong} is equivalent to $\nabla^2f_k(\mathbf{x})\succeq \widetilde{m}\mathbf{I},\forall\mathbf{x}$. This second order definition of strong convexity is more intuitively acceptable and has been used in the analysis of convex optimization algorithms in the literature \cite{boyd2004convex}. But it requires twice differentiability and is not directly useful in the analysis in this work. We further note that when $f_k$ is decomposable as in \eqref{f_decompose} of the dynamic sharing problem, if for any $i=1,...,n$ and positive integer $k$, $f_k^{(i)}$ is strongly convex with constant $\widetilde{m}_i>0$, then Assumption 2 holds with $\widetilde{m}=\min_{i=1,...,n}\widetilde{m}_i>0$. We present the following fact from convex analysis \cite{vandenberghe2016gradient}, which will be used in the later analysis.
\begin{lem}
For any differentiable convex function $h:\mathbb{R}^l\mapsto\mathbb{R}$ and positive constant $L>0$, the following two statements are equivalent:
\begin{enumerate}
\item $\nabla h$ is Lipschitz continuous with constant $L$, i.e., $\left\|\nabla h(\mathbf{x})-\nabla h\left(\mathbf{x}'\right)\right\|_2\leq L\left\|\mathbf{x}-\mathbf{x}'\right\|_2,\forall \mathbf{x},\mathbf{x}'\in\mathbb{R}^l$.
\item $\left\|\nabla h(\mathbf{x})-\nabla h\left(\mathbf{x}'\right)\right\|_2^2\leq L\left(\mathbf{x}-\mathbf{x}'\right)^\mathsf{T}\left(\nabla h(\mathbf{x})-\nabla h\left(\mathbf{x}'\right)\right),\forall\mathbf{x},\mathbf{x}'\in\mathbb{R}^l$.
\end{enumerate}
\end{lem}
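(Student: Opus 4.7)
The plan is to prove the two directions of the equivalence separately. The implication $(2)\Rightarrow(1)$ is routine: applying the Cauchy--Schwarz inequality to the right-hand side of $(2)$ yields $\|\nabla h(\mathbf{x})-\nabla h(\mathbf{x}')\|_2^2\leq L\,\|\mathbf{x}-\mathbf{x}'\|_2\,\|\nabla h(\mathbf{x})-\nabla h(\mathbf{x}')\|_2$, and dividing through by the gradient-difference norm (the case when it is zero is trivially consistent with $(1)$) gives $(1)$.

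The substantive direction is $(1)\Rightarrow(2)$, which is the classical co-coercivity (Baillon--Haddad-type) property for gradients of smooth convex functions. I would first derive the descent lemma from $(1)$: integrating $\nabla h$ along the segment from $\mathbf{z}$ to $\mathbf{y}$ and using the $L$-Lipschitz bound gives $h(\mathbf{y})\leq h(\mathbf{z})+\nabla h(\mathbf{z})^\mathsf{T}(\mathbf{y}-\mathbf{z})+\frac{L}{2}\|\mathbf{y}-\mathbf{z}\|_2^2$. For a fixed point $\mathbf{x}\in\mathbb{R}^l$, I would then introduce the auxiliary function $\phi(\mathbf{y}):=h(\mathbf{y})-\nabla h(\mathbf{x})^\mathsf{T}\mathbf{y}$. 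This $\phi$ is convex, has $L$-Lipschitz gradient (with the same constant $L$), and satisfies $\nabla\phi(\mathbf{x})=\mathbf{0}$, so by convexity $\mathbf{x}$ is a global minimizer of $\phi$. Applying the descent lemma to $\phi$ at an arbitrary base point $\mathbf{z}$ and minimizing the resulting quadratic upper bound over $\mathbf{y}$ (the minimum is attained at $\mathbf{y}=\mathbf{z}-\tfrac{1}{L}\nabla\phi(\mathbf{z})$) yields $\phi(\mathbf{x})\leq\min_{\mathbf{y}}\phi(\mathbf{y})\leq\phi(\mathbf{z})-\tfrac{1}{2L}\|\nabla\phi(\mathbf{z})\|_2^2$. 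Unpacking the definition of $\phi$ produces the ``descent with respect to the minimum'' inequality $h(\mathbf{z})-h(\mathbf{x})-\nabla h(\mathbf{x})^\mathsf{T}(\mathbf{z}-\mathbf{x})\geq\tfrac{1}{2L}\|\nabla h(\mathbf{z})-\nabla h(\mathbf{x})\|_2^2$. Swapping the roles of $\mathbf{x}$ and $\mathbf{z}$ and adding the two resulting inequalities cancels the function values and gives $\tfrac{1}{L}\|\nabla h(\mathbf{x})-\nabla h(\mathbf{z})\|_2^2\leq(\nabla h(\mathbf{x})-\nabla h(\mathbf{z}))^\mathsf{T}(\mathbf{x}-\mathbf{z})$, which is precisely $(2)$ after identifying $\mathbf{x}'=\mathbf{z}$.

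The main obstacle is the $(1)\Rightarrow(2)$ direction, where convexity is indispensable: without it, the quadratic upper bound from the descent lemma cannot be combined with a global optimality statement to squeeze out the squared-gradient term, and co-coercivity is known to fail for smooth non-convex functions. The key algebraic trick is the linear shift $\phi=h-\nabla h(\mathbf{x})^\mathsf{T}(\cdot)$, which recenters so that the fixed point $\mathbf{x}$ becomes a global minimizer of a smooth convex function; this allows the descent lemma to be tightened by the standard one-step gradient minimization and converts $L$-smoothness plus convexity into the desired lower bound on the gradient inner product.
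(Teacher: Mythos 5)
Your proof is correct. Note, however, that the paper does not actually prove this lemma: it is stated as a known fact from convex analysis and attributed to a citation (Vandenberghe's lecture notes), so there is no in-paper argument to compare against. Your write-up is the standard co-coercivity (Baillon--Haddad-type) derivation: the easy direction $(2)\Rightarrow(1)$ via Cauchy--Schwarz, and the substantive direction $(1)\Rightarrow(2)$ via the descent lemma applied to the linearly shifted function $\phi(\mathbf{y})=h(\mathbf{y})-\nabla h(\mathbf{x})^\mathsf{T}\mathbf{y}$, whose global minimizer is $\mathbf{x}$, followed by symmetrization. All steps check out, including the correct handling of the degenerate case $\nabla h(\mathbf{x})=\nabla h(\mathbf{x}')$ in the first direction and the observation that convexity is essential for the second.
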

We further assume that the matrix $\mathbf{B}\in\mathbb{R}^{M\times M}$ is nonsingular.
\begin{assump}
$\mathbf{B}$ is nonsingular.
\end{assump}

\subsection{Convergence Analysis}

In this subsection, we study the convergence behavior of the proposed dynamic ADMM algorithm under the Assumptions 1-4. Due to the strong convexity assumption in Assumptions 1 and 2, there is a unique primal/dual optimal point pair $(\mathbf{x}_k^*,\mathbf{z}_k^*,\boldsymbol{\lambda}_k^*)$ for the dynamic optimization problem \eqref{dynamic_admm} at time $k$. Denote $\mathbf{u}_k=\left[\mathbf{z}_k^\mathsf{T},\boldsymbol{\lambda}_k^\mathsf{T}\right]^\mathsf{T}$ and $\mathbf{u}_k^*=\left[\mathbf{z}_k^{*\mathsf{T}},\boldsymbol{\lambda}_k^{*\mathsf{T}}\right]^\mathsf{T}$. Since $\mathbf{B}$ is a square matrix, the eigenvalues of $\mathbf{BB}^\mathsf{T}$ are the same as those of $\mathbf{B}^\mathsf{T}\mathbf{B}$. Denote the smallest eigenvalue of $\mathbf{BB}^\mathsf{T}$, which is also the smallest eigenvalue of $\mathbf{B}^\mathsf{T}\mathbf{B}$, as $\alpha$. According to Assumption 4, $\mathbf{B}$ is nonsingular, so $\mathbf{BB}^\mathsf{T}$ and $\mathbf{B}^\mathsf{T}\mathbf{B}$ are positive definite and $\alpha>0$. Define matrix $\mathbf{C}\in\mathbb{R}^{2M\times 2M}$ to be:
\begin{align}
\mathbf{C}=\left[
\begin{array}{cc}
\frac{\rho}{2}\mathbf{B}^\mathsf{T}\mathbf{B}&\\
&\frac{1}{2\rho}\mathbf{I}_M
\end{array}
\right]
\end{align}
Since $\mathbf{B}$ is nonsingular (Assumption 4), we know that $\mathbf{C}$ is positive definite. Therefore, we can define a norm on $\mathbb{R}^{2M}$ as $\|\mathbf{u}\|_\mathbf{C}=\sqrt{\mathbf{u}^\mathsf{T}\mathbf{Cu}}$. Define $t$ to be any arbitrary number within the interval $(0,1)$. A positive constant $\delta>0$ is defined as:
\begin{align}\label{delta_def}
\delta=\min\left\{\frac{2mt}{\rho\|\mathbf{B}\|_2^2},\frac{2\alpha\rho(1-t)}{L}\right\},
\end{align}
where $\|\mathbf{B}\|_2$ is the spectral norm, i.e., the maximum singular value, of $\mathbf{B}$. We further note the following fact, which shall be invoked in later analysis.

\begin{lem}
For any symmetric matrix $\mathbf{A}\in\mathbb{R}^{l\times l}$ and any vectors $\mathbf{x,y,z}\in\mathbb{R}^l$, we have:
\begin{align}
2(\mathbf{x-y})^\mathsf{T}\mathbf{A(z-x)}=\mathbf{(z-y)}^\mathsf{T}\mathbf{A(z-y)}-\mathbf{(x-y)}^\mathsf{T}\mathbf{A(x-y)}-\mathbf{(z-x)}^\mathsf{T}\mathbf{A(z-x)}.
\end{align}
\end{lem}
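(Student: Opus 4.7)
The plan is to recognize this identity as the standard polarization-type expansion associated with a symmetric quadratic form, and to prove it by direct algebraic expansion. Concretely, I would first introduce the shorthand $\mathbf{u}=\mathbf{x-y}$ and $\mathbf{v}=\mathbf{z-x}$, so that $\mathbf{z-y}=\mathbf{u+v}$. Under this substitution, the left-hand side becomes $2\mathbf{u}^\mathsf{T}\mathbf{A}\mathbf{v}$ and the right-hand side becomes $(\mathbf{u+v})^\mathsf{T}\mathbf{A}(\mathbf{u+v})-\mathbf{u}^\mathsf{T}\mathbf{A}\mathbf{u}-\mathbf{v}^\mathsf{T}\mathbf{A}\mathbf{v}$, so the entire lemma reduces to a single two-variable scalar identity in $\mathbf{u}$ and $\mathbf{v}$.

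Next I would expand the quadratic form bilinearly, obtaining $(\mathbf{u+v})^\mathsf{T}\mathbf{A}(\mathbf{u+v})=\mathbf{u}^\mathsf{T}\mathbf{A}\mathbf{u}+\mathbf{u}^\mathsf{T}\mathbf{A}\mathbf{v}+\mathbf{v}^\mathsf{T}\mathbf{A}\mathbf{u}+\mathbf{v}^\mathsf{T}\mathbf{A}\mathbf{v}$. The key use of the symmetry hypothesis $\mathbf{A}=\mathbf{A}^\mathsf{T}$ is the observation that the scalar $\mathbf{v}^\mathsf{T}\mathbf{A}\mathbf{u}$ is equal to its own transpose $\mathbf{u}^\mathsf{T}\mathbf{A}^\mathsf{T}\mathbf{v}=\mathbf{u}^\mathsf{T}\mathbf{A}\mathbf{v}$, which allows the two cross terms to be merged into a single term $2\mathbf{u}^\mathsf{T}\mathbf{A}\mathbf{v}$. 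Solving the resulting expression for $2\mathbf{u}^\mathsf{T}\mathbf{A}\mathbf{v}$ and substituting back $\mathbf{u}=\mathbf{x-y}$, $\mathbf{v}=\mathbf{z-x}$, $\mathbf{u+v}=\mathbf{z-y}$ immediately yields the stated equality.

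There is essentially no serious obstacle here: the statement is a one-line polarization identity, and the only nontrivial ingredient is the symmetry of $\mathbf{A}$, which is precisely what makes the two cross terms collapse into one. Without symmetry the left-hand side would instead have to be written as $(\mathbf{x-y})^\mathsf{T}(\mathbf{A}+\mathbf{A}^\mathsf{T})(\mathbf{z-x})$, confirming that the hypothesis is both used and essential. The proof therefore amounts to a short bilinear expansion, and I would present it in a handful of lines rather than as a structured multi-step argument.
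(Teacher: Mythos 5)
Your proof is correct and complete: the substitution $\mathbf{u}=\mathbf{x}-\mathbf{y}$, $\mathbf{v}=\mathbf{z}-\mathbf{x}$ reduces the claim to the polarization identity $2\mathbf{u}^\mathsf{T}\mathbf{A}\mathbf{v}=(\mathbf{u}+\mathbf{v})^\mathsf{T}\mathbf{A}(\mathbf{u}+\mathbf{v})-\mathbf{u}^\mathsf{T}\mathbf{A}\mathbf{u}-\mathbf{v}^\mathsf{T}\mathbf{A}\mathbf{v}$, and you correctly identify the symmetry of $\mathbf{A}$ as the one place where a hypothesis is actually used. The paper states this lemma as a known fact and gives no proof at all, so there is nothing to compare against; your short bilinear expansion is exactly the argument one would supply.
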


Now, we are ready to show the first intermediate result.

\begin{prop}
For any positive integer $k$, we have:
\begin{align}\label{intermediate1}
\|\mathbf{u}_k-\mathbf{u}_k^*\|_\mathbf{C}\leq\frac{1}{\sqrt{1+\delta}}\|\mathbf{u}_{k-1}-\mathbf{u}_k^*\|_\mathbf{C}.
\end{align}
\end{prop}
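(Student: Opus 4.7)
My plan is to mimic the linear-convergence analysis of the standard ADMM (as in Deng--Yin), applied to the static problem at the single time index $k$. The key observation is that both $\mathbf{u}_{k-1}$ and $\mathbf{u}_k^{*}$ refer to the same problem instance (namely the time-$k$ problem \eqref{dynamic_admm}), and Algorithm 1 at iteration $k$ is exactly one sweep of the static ADMM applied to this instance starting from the warm-start $\mathbf{u}_{k-1}$. So no drift terms should appear in this particular inequality; the drift will only be injected later when one replaces $\mathbf{u}_k^*$ by $\mathbf{u}_{k-1}^*$ via a triangle inequality in $\|\cdot\|_\mathbf{C}$.

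The first concrete step is to write the first-order optimality conditions for the $\mathbf{x}$- and $\mathbf{z}$-subproblems \eqref{x_update} and \eqref{z_update} and for the time-$k$ saddle point $(\mathbf{x}_k^{*},\mathbf{z}_k^{*},\boldsymbol{\lambda}_k^{*})$, then eliminate the term $\rho\mathbf{A}^{\mathsf T}(\mathbf{A}\mathbf{x}_k+\mathbf{B}\mathbf{z}_{k-1}-\mathbf{c})+\mathbf{A}^{\mathsf T}\boldsymbol{\lambda}_{k-1}$ using the multiplier update \eqref{lambda_update}. This yields the two residual identities
\begin{align*}
\nabla f_k(\mathbf{x}_k)-\nabla f_k(\mathbf{x}_k^{*})+\mathbf{A}^{\mathsf T}(\boldsymbol{\lambda}_k-\boldsymbol{\lambda}_k^{*})&=\rho\mathbf{A}^{\mathsf T}\mathbf{B}(\mathbf{z}_k-\mathbf{z}_{k-1}),\\
\nabla g_k(\mathbf{z}_k)-\nabla g_k(\mathbf{z}_k^{*})+\mathbf{B}^{\mathsf T}(\boldsymbol{\lambda}_k-\boldsymbol{\lambda}_k^{*})&=\mathbf{0}.
\end{align*}
Taking inner products with $\mathbf{x}_k-\mathbf{x}_k^{*}$ and $\mathbf{z}_k-\mathbf{z}_k^{*}$, adding, and using the dual recursion $\mathbf{A}(\mathbf{x}_k-\mathbf{x}_k^{*})+\mathbf{B}(\mathbf{z}_k-\mathbf{z}_k^{*})=(\boldsymbol{\lambda}_k-\boldsymbol{\lambda}_{k-1})/\rho$ reshapes the dual cross-term into an expression naturally compatible with the $\mathbf{C}$-norm.

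The next step is a ``polarization'' rewriting in the $\mathbf{C}$-norm. Using Lemma 2 with $\mathbf{A}\leftarrow\mathbf{C}$, $\mathbf{y}\leftarrow\mathbf{u}_k^{*}$, $\mathbf{x}\leftarrow\mathbf{u}_k$, $\mathbf{z}\leftarrow\mathbf{u}_{k-1}$, the quantity $2(\mathbf{u}_{k-1}-\mathbf{u}_k)^{\mathsf T}\mathbf{C}(\mathbf{u}_k-\mathbf{u}_k^{*})$ decomposes as $\|\mathbf{u}_{k-1}-\mathbf{u}_k^{*}\|_\mathbf{C}^{2}-\|\mathbf{u}_k-\mathbf{u}_k^{*}\|_\mathbf{C}^{2}-\|\mathbf{u}_{k-1}-\mathbf{u}_k\|_\mathbf{C}^{2}$. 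Matching this with the block-diagonal structure of $\mathbf{C}$, the goal becomes to lower-bound $\|\mathbf{u}_{k-1}-\mathbf{u}_k^{*}\|_\mathbf{C}^{2}-\|\mathbf{u}_k-\mathbf{u}_k^{*}\|_\mathbf{C}^{2}$ by $\delta\|\mathbf{u}_k-\mathbf{u}_k^{*}\|_\mathbf{C}^{2}$; this rearranges to the desired inequality after dividing by $1+\delta$ and taking square roots.

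The main obstacle is producing that $\delta$ bound. Strong convexity of $f_k$ (Assumption 2) controls the $\mathbf{x}$-term but with $\mathbf{x}_k-\mathbf{x}_k^{*}$, not with a quantity that sits inside the $\mathbf{C}$-norm, so the $f_k$-contribution will ultimately be discarded; the essential work rests on the $g_k$ and $\boldsymbol{\lambda}$ blocks. Here I would \emph{split} the strong-convexity inequality of $g_k$ using the free parameter $t\in(0,1)$: a fraction $t$ of the $m\|\mathbf{z}_k-\mathbf{z}_k^{*}\|_2^{2}$ budget will be used, via $\|\mathbf{B}\|_2^{2}$ and a Young-type inequality, to dominate residual cross-terms of the form $\rho\mathbf{A}(\mathbf{x}_k-\mathbf{x}_k^{*})^{\mathsf T}\mathbf{B}(\mathbf{z}_k-\mathbf{z}_{k-1})$ and to generate the factor $\tfrac{2mt}{\rho\|\mathbf{B}\|_2^{2}}$ in front of the $\tfrac{\rho}{2}\mathbf{B}^{\mathsf T}\mathbf{B}$ block of $\mathbf{C}$; the remaining fraction $1-t$ is converted, via Lemma 1 applied to $g_k$ and the identity $\mathbf{B}^{\mathsf T}(\boldsymbol{\lambda}_k-\boldsymbol{\lambda}_k^{*})=-(\nabla g_k(\mathbf{z}_k)-\nabla g_k(\mathbf{z}_k^{*}))$ together with $\alpha=\lambda_{\min}(\mathbf{B}\mathbf{B}^{\mathsf T})$, into a lower bound $\tfrac{2\alpha\rho(1-t)}{L}\cdot\tfrac{1}{2\rho}\|\boldsymbol{\lambda}_k-\boldsymbol{\lambda}_k^{*}\|_2^{2}$ on the dual block of $\mathbf{C}$. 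Taking the minimum of the two resulting prefactors yields $\delta$ as in \eqref{delta_def}. Balancing these two inequalities so that both dominate the same matrix block of $\mathbf{C}$ is the technically delicate part; everything else is routine algebra once the right additive split and the right application of Young's inequality have been chosen.
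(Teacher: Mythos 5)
Your plan is essentially the paper's own proof: subtract the KKT conditions of the time-$k$ problem from the first-order conditions of the two subproblems, take the $t$-weighted combination of the strong-convexity bound on $g_k$ and the Lipschitz bound from Lemma 1, add the monotonicity inequality for $\nabla f_k$, substitute the dual recursion $\mathbf{A}(\mathbf{x}_k-\mathbf{x}_k^{*})+\mathbf{B}(\mathbf{z}_k-\mathbf{z}_k^{*})=(\boldsymbol{\lambda}_k-\boldsymbol{\lambda}_{k-1})/\rho$, polarize with Lemma 2 (your single application with $\mathbf{C}$ is the same as the paper's two block-wise applications, since $\mathbf{C}$ is block diagonal), and read off $\delta$ from \eqref{delta_def}. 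The drift correctly plays no role here, exactly as you observe.

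One bookkeeping point in your last paragraph is off, though it does not sink the argument: the residual cross-term is \emph{not} dominated by the $tm\|\mathbf{z}_k-\mathbf{z}_k^{*}\|_2^{2}$ budget. After the dual-recursion substitution, what remains is $(\boldsymbol{\lambda}_k-\boldsymbol{\lambda}_{k-1})^{\mathsf T}\mathbf{B}(\mathbf{z}_{k-1}-\mathbf{z}_k)$, and it is absorbed by the leftover $\|\mathbf{u}_{k-1}-\mathbf{u}_k\|_\mathbf{C}^{2}$ term produced by the polarization identity, via the perfect square $\frac{1}{2\rho}\left\|\boldsymbol{\lambda}_k-\boldsymbol{\lambda}_{k-1}+\rho\mathbf{B}(\mathbf{z}_{k-1}-\mathbf{z}_k)\right\|_2^{2}\geq 0$ (equivalently, Young's inequality with weights $\tfrac{1}{2\rho}$ and $\tfrac{\rho}{2}$). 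A direct Young bound of $\rho(\mathbf{A}(\mathbf{x}_k-\mathbf{x}_k^{*}))^{\mathsf T}\mathbf{B}(\mathbf{z}_k-\mathbf{z}_{k-1})$ against $m\|\mathbf{z}_k-\mathbf{z}_k^{*}\|_2^{2}$ would fail, since neither $\mathbf{A}(\mathbf{x}_k-\mathbf{x}_k^{*})$ nor $\mathbf{z}_k-\mathbf{z}_{k-1}$ is controlled by that quantity. The entire $tm\|\mathbf{z}_k-\mathbf{z}_k^{*}\|_2^{2}$ is then free to produce the factor $\tfrac{2mt}{\rho\|\mathbf{B}\|_2^{2}}$ on the $\mathbf{z}$-block, which is what makes the stated $\delta$ come out exactly.
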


\begin{proof}
Due to the strong convexity assumption in Assumptions 1 and 2, the subproblems in \eqref{x_update} and \eqref{z_update} are unconstrained convex optimization problems. Thus, vanishing gradient is necessary and sufficient for optimality of the subproblems \eqref{x_update} and \eqref{z_update}. The updates of $\mathbf{x}$ and $\mathbf{z}$ can be rewritten as:
\begin{align}
&\label{x_upd}\nabla f_k(\mathbf{x}_k)+\mathbf{A}^\mathsf{T}\boldsymbol{\lambda}_{k-1}+\rho\mathbf{A}^\mathsf{T}(\mathbf{Ax}_k+\mathbf{Bz}_{k-1}-\mathbf{c})=\mathbf{0},\\
&\label{z_upd}\nabla g_k(\mathbf{z}_k)+\mathbf{B}^\mathsf{T}\boldsymbol{\lambda}_{k-1}+\rho\mathbf{B}^\mathsf{T}(\mathbf{Ax}_k+\mathbf{Bz}_k-\mathbf{c})=\mathbf{0}.
\end{align}
Combining \eqref{z_upd} and \eqref{lambda_update} yields:
\begin{align}\label{g_relation}
\nabla g_k(\mathbf{z}_k)+\mathbf{B}^\mathsf{T}\boldsymbol{\lambda}_k=\mathbf{0}.
\end{align}
Combining \eqref{x_upd} and \eqref{lambda_update} gives:
\begin{align}\label{f_relation}
\nabla f_k(\mathbf{x}_k)+\mathbf{A}^\mathsf{T}(\boldsymbol{\lambda}_k+\rho\mathbf{B}(\mathbf{z}_{k-1}-\mathbf{z}_k))=\mathbf{0}.
\end{align}
According to Assumptions 1 and 2, the problem \eqref{dynamic_admm} is a convex optimization problem. Thus, Karush-Kuhn-Tucker (KKT) conditions are necessary and sufficient for optimality. Hence,
\begin{align}
\label{f_kkt}&\nabla f_k(\mathbf{x}_k^*)+\mathbf{A}^\mathsf{T}\boldsymbol{\lambda}_k^*=\mathbf{0},\\
\label{g_kkt}&\nabla g_k(\mathbf{z}_k^*)+\mathbf{B}^\mathsf{T}\boldsymbol{\lambda}_k^*=\mathbf{0},\\
\label{feasible_kkt}&\mathbf{Ax}_k^*+\mathbf{Bz}_k^*=\mathbf{c}.
\end{align}
Because of the convexity of $g_k$ (Assumption 1) and Lipschitz continuity of its gradient (Assumption 3), by invoking Lemma 1, we get:
\begin{align}
\|\nabla g_k(\mathbf{z}_k)-\nabla g_k(\mathbf{z}_k^*)\|_2^2\leq L(\mathbf{z}_k-\mathbf{z}_k^*)^\mathsf{T}(\nabla g_k(\mathbf{z}_k)-\nabla g_k(\mathbf{z}_k^*)).
\end{align}
Further using \eqref{g_kkt} and \eqref{g_relation}, we obtain:
\begin{align}\label{1}
(\mathbf{z}_k-\mathbf{z}_k^*)^\mathsf{T}\mathbf{B}^\mathsf{T}(\boldsymbol{\lambda}_k^*-\boldsymbol{\lambda}_k)\geq\frac{1}{L}\left\|\mathbf{B}^\mathsf{T}(\boldsymbol{\lambda}_k-\boldsymbol{\lambda}_k^*)\right\|_2^2.
\end{align}
According to the strong convexity of $g_k$ (Assumption 1), we have:
\begin{align}\label{2}
m\|\mathbf{z}_k-\mathbf{z}_k^*\|_2^2\leq(\nabla g_k(\mathbf{z}_k)-\nabla g_k(\mathbf{z}_k^*))^\mathsf{T}(\mathbf{z}_k-\mathbf{z}_k^*)=\left(-\mathbf{B}^\mathsf{T}\boldsymbol{\lambda}_k+\mathbf{B}^\mathsf{T}\boldsymbol{\lambda}_k^*\right)^\mathsf{T}(\mathbf{z}_k-\mathbf{z}_k^*).
\end{align}
Combining \eqref{1} and \eqref{2}, we know that for any $t\in(0,1)$:
\begin{align}\label{3}
(\mathbf{z}_k-\mathbf{z}_k^*)^\mathsf{T}\mathbf{B}^\mathsf{T}(\boldsymbol{\lambda}_k^*-\boldsymbol{\lambda}_k)\geq tm\|\mathbf{z}_k-\mathbf{z}_k^*\|_2^2+\frac{1-t}{L}\left\|\mathbf{B}^\mathsf{T}(\boldsymbol{\lambda}_k-\boldsymbol{\lambda}_k^*)\right\|_2^2.
\end{align}
According to the convexity of $f_k$ (Assumption 2), we have:
\begin{align}
0\leq (\mathbf{x}_k-\mathbf{x}_k^*)^\mathsf{T}(\nabla f_k(\mathbf{x}_k)-\nabla f_k(\mathbf{x}_k^*)).
\end{align}
Further making use of \eqref{f_relation} and \eqref{f_kkt}, we get:
\begin{align}\label{4}
0\leq(\mathbf{x}_k-\mathbf{x}_k^*)^\mathsf{T}\mathbf{A}^\mathsf{T}(\boldsymbol{\lambda}_k^*-\boldsymbol{\lambda}_k+\rho\mathbf{B}(\mathbf{z}_k-\mathbf{z}_{k-1})).
\end{align}
Adding \eqref{3} and \eqref{4} leads to:
\begin{align}
\begin{split}\label{6}
&(\mathbf{z}_k-\mathbf{z}_k^*)^\mathsf{T}\mathbf{B}^\mathsf{T}(\boldsymbol{\lambda}_k^*-\boldsymbol{\lambda}_k)+(\mathbf{x}_k-\mathbf{x}_k^*)^\mathsf{T}\mathbf{A}^\mathsf{T}(\boldsymbol{\lambda}_k^*-\boldsymbol{\lambda}_k+\rho\mathbf{B}(\mathbf{z}_k-\mathbf{z}_{k-1}))\\
&\geq tm\|\mathbf{z}_k-\mathbf{z}_k^*\|_2^2+\frac{1-t}{L}\left\|\mathbf{B}^\mathsf{T}(\boldsymbol{\lambda}_k-\boldsymbol{\lambda}_k^*)\right\|_2^2.
\end{split}
\end{align}
From \eqref{lambda_update} and \eqref{feasible_kkt}, we get:
\begin{align}\label{5}
\mathbf{A}(\mathbf{x}_k-\mathbf{x}_k^*)+\mathbf{B}(\mathbf{z}_k-\mathbf{z}_k^*)=\frac{1}{\rho}(\boldsymbol{\lambda}_k-\boldsymbol{\lambda}_{k-1}).
\end{align}
Making use of \eqref{5}, we derive:
\begin{align}
&(\mathbf{z}_k-\mathbf{z}_k^*)^\mathsf{T}\mathbf{B}^\mathsf{T}(\boldsymbol{\lambda}_k^*-\boldsymbol{\lambda}_k)+(\mathbf{x}_k-\mathbf{x}_k^*)^\mathsf{T}\mathbf{A}^\mathsf{T}(\boldsymbol{\lambda}_k^*-\boldsymbol{\lambda}_k+\rho\mathbf{B}(\mathbf{z}_k-\mathbf{z}_{k-1}))\\
&=(\boldsymbol{\lambda}_k^*-\boldsymbol{\lambda}_k)^\mathsf{T}[\mathbf{B}(\mathbf{z}_k-\mathbf{z}_k^*)+\mathbf{A}(\mathbf{x}_k-\mathbf{x}_k^*)]+\rho(\mathbf{x}_k-\mathbf{x}_k^*)^\mathsf{T}\mathbf{A}^\mathsf{T}\mathbf{B}(\mathbf{z}_k-\mathbf{z}_{k-1})\\
&=\frac{1}{\rho}(\boldsymbol{\lambda}_k^*-\boldsymbol{\lambda}_k)^\mathsf{T}(\boldsymbol{\lambda}_k-\boldsymbol{\lambda}_{k-1})+\rho(\mathbf{A}(\mathbf{x}_k-\mathbf{x}_k^*))^\mathsf{T}\mathbf{B}(\mathbf{z}_k-\mathbf{z}_{k-1})\\
&=\frac{1}{\rho}(\boldsymbol{\lambda}_{k-1}-\boldsymbol{\lambda}_k)^\mathsf{T}(\boldsymbol{\lambda}_k-\boldsymbol{\lambda}_k^*)+(\boldsymbol{\lambda}_k-\boldsymbol{\lambda}_{k-1}-\rho\mathbf{B}(\mathbf{z}_k-\mathbf{z}_k^*))^\mathsf{T}\mathbf{B}(\mathbf{z}_k-\mathbf{z}_{k-1})
\end{align}
Together with \eqref{6}, we get:
\begin{align}
&\frac{1}{\rho}(\boldsymbol{\lambda}_{k-1}-\boldsymbol{\lambda}_k)^\mathsf{T}(\boldsymbol{\lambda}_k-\boldsymbol{\lambda}_k^*)+\rho(\mathbf{z}_k-\mathbf{z}_k^*)^\mathsf{T}\mathbf{B}^\mathsf{T}\mathbf{B}(\mathbf{z}_{k-1}-\mathbf{z}_k)\\
&\geq (\boldsymbol{\lambda}_k-\boldsymbol{\lambda}_{k-1})^\mathsf{T}\mathbf{B}(\mathbf{z}_{k-1}-\mathbf{z}_k)+tm\|\mathbf{z}_k-\mathbf{z}_k^*\|_2^2+\frac{1-t}{L}\left\|\mathbf{B}^\mathsf{T}(\boldsymbol{\lambda}_k-\boldsymbol{\lambda}_k^*)\right\|_2^2,
\end{align}
which is equivalent to:
\begin{align}
&\frac{1}{\rho}(\boldsymbol{\lambda}_{k-1}-\boldsymbol{\lambda}_k)^\mathsf{T}(\boldsymbol{\lambda}_k-\boldsymbol{\lambda}_{k-1}+\boldsymbol{\lambda}_{k-1}-\boldsymbol{\lambda}_k^*)+\rho(\mathbf{z}_{k-1}-\mathbf{z}_k)^\mathsf{T}\mathbf{B}^\mathsf{T}\mathbf{B}(\mathbf{z}_k-\mathbf{z}_{k-1}+\mathbf{z}_{k-1}-\mathbf{z}_k^*)\\
&\geq (\boldsymbol{\lambda}_k-\boldsymbol{\lambda}_{k-1})^\mathsf{T}\mathbf{B}(\mathbf{z}_{k-1}-\mathbf{z}_k)+tm\|\mathbf{z}_k-\mathbf{z}_k^*\|_2^2+\frac{1-t}{L}\left\|\mathbf{B}^\mathsf{T}(\boldsymbol{\lambda}_k-\boldsymbol{\lambda}_k^*)\right\|_2^2.
\end{align}
This can be further rewritten as:
\begin{align}
\begin{split}\label{9}
&\frac{1}{\rho}(\boldsymbol{\lambda}_{k-1}-\boldsymbol{\lambda}_k)^\mathsf{T}(\boldsymbol{\lambda}_{k-1}-\boldsymbol{\lambda}_k^*)+\rho(\mathbf{z}_{k-1}-\mathbf{z}_k)^\mathsf{T}\mathbf{B}^\mathsf{T}\mathbf{B}(\mathbf{z}_{k-1}-\mathbf{z}_k^*)\\
&\geq \frac{1}{\rho}\|\boldsymbol{\lambda}_{k-1}-\boldsymbol{\lambda}_k\|_2^2+\rho\|\mathbf{Bz}_{k-1}-\mathbf{Bz}_k\|_2^2+(\boldsymbol{\lambda}_k-\boldsymbol{\lambda}_{k-1})^\mathsf{T}\mathbf{B}(\mathbf{z}_{k-1}-\mathbf{z}_k)\\
&~~~+mt\|\mathbf{z}_k-\mathbf{z}_k^*\|_2^2+\frac{1-t}{L}\left\|\mathbf{B}^\mathsf{T}(\boldsymbol{\lambda}_k-\boldsymbol{\lambda}_k^*)\right\|_2^2.
\end{split}
\end{align}

Making use of Lemma 2, we obtain:
\begin{align}
\begin{split}\label{7}
&\frac{1}{\rho}(\boldsymbol{\lambda}_{k-1}-\boldsymbol{\lambda}_k)^\mathsf{T}(\boldsymbol{\lambda}_{k-1}-\boldsymbol{\lambda}_k^*)\\
&=-\frac{1}{2\rho}\|\boldsymbol{\lambda}_k^*-\boldsymbol{\lambda}_k\|_2^2+\frac{1}{2\rho}\|\boldsymbol{\lambda}_{k-1}-\boldsymbol{\lambda}_k\|_2^2+\frac{1}{2\rho}\|\boldsymbol{\lambda}_k^*-\boldsymbol{\lambda}_{k-1}\|_2^2,
\end{split}
\end{align}
and
\begin{align}
\begin{split}\label{8}
&\rho(\mathbf{z}_{k-1}-\mathbf{z}_k)^\mathsf{T}\mathbf{B}^\mathsf{T}\mathbf{B}(\mathbf{z}_{k-1}-\mathbf{z}_k^*)\\
&=-\frac{\rho}{2}\|\mathbf{Bz}_k^*-\mathbf{Bz}_k\|_2^2+\frac{\rho}{2}\|\mathbf{Bz}_{k-1}-\mathbf{Bz}_k\|_2^2+\frac{\rho}{2}\|\mathbf{Bz}_k^*-\mathbf{Bz}_{k-1}\|_2^2.
\end{split}
\end{align}
Combining \eqref{7} and \eqref{8} and further utilizing \eqref{9} gives:
\begin{align}
&\frac{1}{2\rho}\|\boldsymbol{\lambda}_{k-1}-\boldsymbol{\lambda}_k^*\|_2^2+\frac{\rho}{2}\|\mathbf{Bz}_{k-1}-\mathbf{Bz}_k^*\|_2^2-\frac{1}{2\rho}\|\boldsymbol{\lambda}_k-\boldsymbol{\lambda}_k^*\|_2^2-\frac{\rho}{2}\|\mathbf{Bz}_k-\mathbf{Bz}_k^*\|_2^2\\
&=\frac{1}{\rho}(\boldsymbol{\lambda}_{k-1}-\boldsymbol{\lambda}_k)^\mathsf{T}(\boldsymbol{\lambda}_{k-1}-\boldsymbol{\lambda}_k^*)+\rho(\mathbf{z}_{k-1}-\mathbf{z}_k)^\mathsf{T}\mathbf{B}^\mathsf{T}\mathbf{B}(\mathbf{z}_{k-1}-\mathbf{z}_k^*)\nonumber\\
&~~~-\frac{1}{2\rho}\|\boldsymbol{\lambda}_{k-1}-\boldsymbol{\lambda}_k\|_2^2-\frac{\rho}{2}\|\mathbf{Bz}_{k-1}-\mathbf{Bz}_k\|_2^2\\
&\geq \frac{1}{2\rho}\|\boldsymbol{\lambda}_{k-1}-\boldsymbol{\lambda}_k\|_2^2+\frac{\rho}{2}\|\mathbf{Bz}_{k-1}-\mathbf{Bz}_k\|_2^2+(\boldsymbol{\lambda}_k-\boldsymbol{\lambda}_{k-1})^\mathsf{T}\mathbf{B}(\mathbf{z}_{k-1}-\mathbf{z}_k)\nonumber\\
&~~~+mt\|\mathbf{z}_k-\mathbf{z}_k^*\|_2^2+\frac{1-t}{L}\left\|\mathbf{B}^\mathsf{T}(\boldsymbol{\lambda}_k-\boldsymbol{\lambda}_k^*)\right\|_2^2\\
&=\frac{1}{2\rho}\left\|\boldsymbol{\lambda}_k-\boldsymbol{\lambda}_{k-1}+\rho(\mathbf{Bz}_{k-1}-\mathbf{Bz}_k)\right\|_2^2+mt\|\mathbf{z}_k-\mathbf{z}_k^*\|_2^2+\frac{1-t}{L}\left\|\mathbf{B}^\mathsf{T}(\boldsymbol{\lambda}_k-\boldsymbol{\lambda}_k^*)\right\|_2^2\\
&\geq mt\|\mathbf{z}_k-\mathbf{z}_k^*\|_2^2+\frac{1-t}{L}\left\|\mathbf{B}^\mathsf{T}(\boldsymbol{\lambda}_k-\boldsymbol{\lambda}_k^*)\right\|_2^2\\
&\geq\frac{mt}{\|\mathbf{B}\|_2^2}\|\mathbf{Bz}_k-\mathbf{Bz}_k^*\|_2^2+\frac{\alpha(1-t)}{L}\|\boldsymbol{\lambda}_k-\boldsymbol{\lambda}_k^*\|_2^2\\
&\geq\delta\left(\frac{1}{2\rho}\|\boldsymbol{\lambda}_k-\boldsymbol{\lambda}_k^*\|_2^2+\frac{\rho}{2}\|\mathbf{Bz}_k-\mathbf{Bz}_k^*\|_2^2\right),
\end{align}
where the last step is due to the definition of $\delta$ in \eqref{delta_def}. Noticing the definition of $\|\cdot\|_\mathbf{C}$, we get:
\begin{align}
\|\mathbf{u}_{k-1}-\mathbf{u}_k^*\|_\mathbf{C}^2\geq(1+\delta)\|\mathbf{u}_k-\mathbf{u}_k^*\|_\mathbf{C}^2,
\end{align}
which is tantamount to \eqref{intermediate1}.
\end{proof}

\begin{rem}
Proposition 1 states that $\mathbf{u}_k$ is closer to $\mathbf{u}_k^*$ than $\mathbf{u}_{k-1}$ with a shrinkage factor of $\delta$. The bigger the $\delta$, the stronger the shrinkage. Note that there is an arbitrary factor $t\in(0,1)$ in the definition of $\delta$ in \eqref{delta_def}. By choosing $t=\frac{\alpha\rho^2\|\mathbf{B}\|_2^2}{mL+\alpha\rho^2\|\mathbf{B}\|_2^2}$, we get the maximum $\delta$ as $\delta_{\max}=\frac{2m\alpha\rho}{mL+\alpha\rho^2\|\mathbf{B}\|_2^2}$. In the expression of $\delta_{\max}$, only $\rho$ is a tunable algorithm parameter while all other parameters are given by the optimization problem. The fact that $\delta_{\max}$ increases with $\rho$ may partially justify the need of a relatively large $\rho$ for good convergence behaviors of the dynamic ADMM. We will investigate the impact of $\rho$ on algorithm performance empirically in Section \Rmnum{4}.
\end{rem}

Proposition 1 establishes a relation between $\|\mathbf{u}_k-\mathbf{u}_k^*\|_\mathbf{C}$ and $\|\mathbf{u}_{k-1}-\mathbf{u}_k^*\|_\mathbf{C}$. However, to describe the convergence behavior of the dynamic ADMM algorithm, what we really want is the relation between $\|\mathbf{u}_k-\mathbf{u}_k^*\|_\mathbf{C}$ and $\|\mathbf{u}_{k-1}-\mathbf{u}_{k-1}^*\|_\mathbf{C}$. This is accomplished by the following theorem.

\begin{thm}
Define the drift $d_k$ of the dynamic problem \eqref{dynamic_admm} to be:
\begin{align}\label{drift_def}
d_k=\sqrt{\frac{\rho}{2}}\|\mathbf{B}\|_2\|\mathbf{z}_{k-1}^*-\mathbf{z}_k^*\|_2+\frac{1}{\sqrt{2\rho\alpha}}\|\nabla g_{k-1}(\mathbf{z}_{k-1}^*)-\nabla g_k(\mathbf{z}_k^*)\|_2.
\end{align}
Then, for any integer $k\geq2$, we have:
\begin{align}
\|\mathbf{u}_k-\mathbf{u}_k^*\|_\mathbf{C}\leq\frac{1}{\sqrt{1+\delta}}(\|\mathbf{u}_{k-1}-\mathbf{u}_{k-1}^*\|_\mathbf{C}+d_k).
\end{align}
\end{thm}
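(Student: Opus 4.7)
The plan is to chain together Proposition 1 (which is already proven) with a triangle-inequality style bound that introduces the drift $d_k$. Proposition 1 controls $\|\mathbf{u}_k-\mathbf{u}_k^*\|_\mathbf{C}$ in terms of $\|\mathbf{u}_{k-1}-\mathbf{u}_k^*\|_\mathbf{C}$, so the only work is to re-anchor $\mathbf{u}_{k-1}$ to its own optimum $\mathbf{u}_{k-1}^*$ and pay the price of the drift between two consecutive optima $\mathbf{u}_{k-1}^*$ and $\mathbf{u}_k^*$.

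First I would apply the triangle inequality for the weighted norm $\|\cdot\|_\mathbf{C}$ (it is a norm because $\mathbf{C}\succ 0$ by Assumption 4), writing
\begin{align*}
\|\mathbf{u}_{k-1}-\mathbf{u}_k^*\|_\mathbf{C}\le \|\mathbf{u}_{k-1}-\mathbf{u}_{k-1}^*\|_\mathbf{C}+\|\mathbf{u}_{k-1}^*-\mathbf{u}_k^*\|_\mathbf{C},
\end{align*}
and combine with Proposition 1 to reduce the theorem to proving the single estimate $\|\mathbf{u}_{k-1}^*-\mathbf{u}_k^*\|_\mathbf{C}\le d_k$.

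Next I would unpack the $\mathbf{C}$-norm using its block-diagonal form. From the definition,
\begin{align*}
\|\mathbf{u}_{k-1}^*-\mathbf{u}_k^*\|_\mathbf{C}^2=\tfrac{\rho}{2}\|\mathbf{B}(\mathbf{z}_{k-1}^*-\mathbf{z}_k^*)\|_2^2+\tfrac{1}{2\rho}\|\boldsymbol{\lambda}_{k-1}^*-\boldsymbol{\lambda}_k^*\|_2^2,
\end{align*}
and then use the elementary bound $\sqrt{a^2+b^2}\le a+b$ for $a,b\ge 0$ to split this into a primal-optimum drift term and a dual-optimum drift term. The primal term is immediate: $\|\mathbf{B}(\mathbf{z}_{k-1}^*-\mathbf{z}_k^*)\|_2\le\|\mathbf{B}\|_2\|\mathbf{z}_{k-1}^*-\mathbf{z}_k^*\|_2$, which produces exactly the first summand of $d_k$.

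The main (though still short) work is on the dual term. I would use the KKT stationarity condition \eqref{g_kkt} at times $k$ and $k-1$, namely $\nabla g_k(\mathbf{z}_k^*)+\mathbf{B}^\mathsf{T}\boldsymbol{\lambda}_k^*=\mathbf{0}$ and $\nabla g_{k-1}(\mathbf{z}_{k-1}^*)+\mathbf{B}^\mathsf{T}\boldsymbol{\lambda}_{k-1}^*=\mathbf{0}$, to rewrite
\begin{align*}
\mathbf{B}^\mathsf{T}(\boldsymbol{\lambda}_{k-1}^*-\boldsymbol{\lambda}_k^*)=\nabla g_k(\mathbf{z}_k^*)-\nabla g_{k-1}(\mathbf{z}_{k-1}^*).
\end{align*}
Since $\alpha>0$ is the smallest eigenvalue of $\mathbf{B}\mathbf{B}^\mathsf{T}$ (by Assumption 4), we have $\|\mathbf{B}^\mathsf{T}\mathbf{y}\|_2^2\ge\alpha\|\mathbf{y}\|_2^2$ for every $\mathbf{y}$, so $\|\boldsymbol{\lambda}_{k-1}^*-\boldsymbol{\lambda}_k^*\|_2\le\alpha^{-1/2}\|\nabla g_{k-1}(\mathbf{z}_{k-1}^*)-\nabla g_k(\mathbf{z}_k^*)\|_2$. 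Multiplying by $\tfrac{1}{\sqrt{2\rho}}$ produces exactly the second summand in $d_k$, yielding $\|\mathbf{u}_{k-1}^*-\mathbf{u}_k^*\|_\mathbf{C}\le d_k$ and closing the argument.

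The only subtle point, rather than an obstacle, is recognising that one cannot extract a meaningful estimate from $\nabla f_k(\mathbf{x}_k^*)+\mathbf{A}^\mathsf{T}\boldsymbol{\lambda}_k^*=\mathbf{0}$ (since $\mathbf{A}$ is in general rectangular and can be rank-deficient), which is why the dual drift in $d_k$ is expressed through $g_k$ rather than $f_k$, and why Assumption 4 on $\mathbf{B}$ is indispensable precisely at this step.
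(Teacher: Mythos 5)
Your proposal is correct and follows essentially the same route as the paper: KKT stationarity of $g$ at times $k-1$ and $k$ to convert the dual-optimum drift into a gradient difference, the lower bound $\|\mathbf{B}^\mathsf{T}\mathbf{y}\|_2^2\geq\alpha\|\mathbf{y}\|_2^2$ from Assumption 4, the spectral-norm bound on the primal part, and then the triangle inequality chained with Proposition 1. The only cosmetic difference is the order of the steps (you apply the triangle inequality first and the paper applies it last) and the use of $\sqrt{a^2+b^2}\leq a+b$ in place of the paper's equivalent $a^2+b^2\leq(a+b)^2$.
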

\begin{proof}
According to \eqref{g_kkt}, we have:
\begin{align}
\label{a1}&\nabla g_k(\mathbf{z}_k^*)+\mathbf{B}^\mathsf{T}\boldsymbol{\lambda}_k^*=\mathbf{0},\\
\label{a2}&\nabla g_{k-1}(\mathbf{z}_{k-1}^*)+\mathbf{B}^\mathsf{T}\boldsymbol{\lambda}_{k-1}^*=\mathbf{0}.
\end{align}
Substraction of \eqref{a2} from \eqref{a1} yields:
\begin{align}
\mathbf{B}^\mathsf{T}(\boldsymbol{\lambda}_k^*-\boldsymbol{\lambda}_{k-1}^*)=-\nabla g_k(\mathbf{z}_k^*)+\nabla g_{k-1}(\mathbf{z}_{k-1}^*).
\end{align}
Hence,
\begin{align}
&\|\nabla g_{k-1}(\mathbf{z}_{k-1}^*)-\nabla g_k(\mathbf{z}_k^*)\|_2^2\\
&=\|\mathbf{B}^\mathsf{T}(\boldsymbol{\lambda}_{k-1}^*-\boldsymbol{\lambda}_k^*)\|_2^2\\
&=(\boldsymbol{\lambda}_{k-1}^*-\boldsymbol{\lambda}_k^*)^\mathsf{T}\mathbf{B}\mathbf{B}^\mathsf{T}(\boldsymbol{\lambda}_{k-1}^*-\boldsymbol{\lambda}_k^*)\\
\label{a3}&\geq\alpha\|\boldsymbol{\lambda}_{k-1}^*-\boldsymbol{\lambda}_k^*\|_2^2.
\end{align}
On the other hand,
\begin{align}\label{a4}
(\mathbf{z}_{k-1}^*-\mathbf{z}_k^*)^\mathsf{T}\mathbf{B}^\mathsf{T}\mathbf{B}(\mathbf{z}_{k-1}^*-\mathbf{z}_k^*)\leq\|\mathbf{B}\|_2^2\|\mathbf{z}_{k-1}^*-\mathbf{z}_k^*\|_2^2.
\end{align}
Combining \eqref{a3} and \eqref{a4}, we get:
\begin{align}
&\|\mathbf{u}_{k-1}^*-\mathbf{u}_k^*\|_\mathbf{C}^2\\
&=\frac{\rho}{2}(\mathbf{z}_{k-1}^*-\mathbf{z}_k^*)^\mathsf{T}\mathbf{B}^\mathsf{T}\mathbf{B}(\mathbf{z}_{k-1}^*-\mathbf{z}_k^*)+\frac{1}{2\rho}\|\boldsymbol{\lambda}_{k-1}^*-\boldsymbol{\lambda}_k^*\|_2^2\\
&\leq\frac{\rho}{2}\|\mathbf{B}\|_2^2\|\mathbf{z}_{k-1}^*-\mathbf{z}_k^*\|_2^2+\frac{1}{2\rho\alpha}\|\nabla g_{k-1}(\mathbf{z}_{k-1}^*)-\nabla g_k(\mathbf{z}_k^*)\|_2^2\\
&\leq\left(\sqrt{\frac{\rho}{2}}\|\mathbf{B}\|_2\|\mathbf{z}_{k-1}^*-\mathbf{z}_k^*\|_2+\frac{1}{\sqrt{2\rho\alpha}}\|\nabla g_{k-1}(\mathbf{z}_{k-1}^*)-\nabla g_k(\mathbf{z}_k^*)\|_2\right)^2\\
&=d_k^2.
\end{align}
Thus, $\|\mathbf{u}_{k-1}^*-\mathbf{u}_k^*\|_\mathbf{C}\leq d_k$ and:
\begin{align}\label{a5}
\|\mathbf{u}_{k-1}-\mathbf{u}_k^*\|_\mathbf{C}\leq\|\mathbf{u}_{k-1}-\mathbf{u}_{k-1}^*\|_\mathbf{C}+\|\mathbf{u}_{k-1}^*-\mathbf{u}_k^*\|_\mathbf{C}\leq\|\mathbf{u}_{k-1}-\mathbf{u}_{k-1}^*\|_\mathbf{C}+d_k.
\end{align}
Combining \eqref{a5} and \eqref{intermediate1} in Proposition 1 gives:
\begin{align}
\|\mathbf{u}_k-\mathbf{u}_k^*\|_\mathbf{C}\leq\frac{1}{\sqrt{1+\delta}}(\|\mathbf{u}_{k-1}-\mathbf{u}_{k-1}^*\|_\mathbf{C}+d_k).
\end{align}
\end{proof}
\begin{rem}
Theorem 1 means that $\mathbf{u}_k$ converges Q-linearly (with contraction factor $\sqrt{1+\delta}$) to some neighborhood of the optimal point $\mathbf{u}_k^*$. The size of the neighborhood is characterized by $d_k$, the drift of the dynamic problem \eqref{dynamic_admm}, which is determined by the problem formulation instead of the algorithm. The more drastically the dynamic problem \eqref{dynamic_admm} varies across time, the bigger the drift $d_k$, and the larger the size of that neighborhood. When the dynamic problem \eqref{dynamic_admm} degenerates to its static counterpart, i.e., $f_k$ and $g_k$ does not vary with $k$, $d_k$ becomes zero. In such a case, Theorem 1 degenerates to the linear convergence result of static ADMM in \cite{deng2016global}.
\end{rem}
Q-linear convergence of $\mathbf{u}_k$ to some neighborhood of the optimal point $\mathbf{u}_k^*$ is established in Theorem 1. A more meaningful result will be about the convergence properties of $\mathbf{x}_k,\mathbf{z}_k,\boldsymbol{\lambda}_k$. To this end, we want to link the quantities $\|\mathbf{x}_k-\mathbf{x}_k^*\|_2$, $\|\mathbf{z}_k-\mathbf{z}_k^*\|_2$, $\|\boldsymbol{\lambda}_k-\boldsymbol{\lambda}_k^*\|_2$ with $\|\mathbf{u}_k-\mathbf{u}_k^*\|_\mathbf{C}$. This is accomplished by the following theorem.
\begin{thm}
For any integer $k\geq2$, we have:
\begin{align}
\begin{split}\label{a8}
&\|\mathbf{x}_k-\mathbf{x}_k^*\|_2\\
&\leq\frac{1}{\widetilde{m}}\|\mathbf{A}\|_2\left[\left(\sqrt{2\rho}+\|\mathbf{B}\|_2\sqrt{\frac{2\rho}{\alpha}}\right)\|\mathbf{u}_k-\mathbf{u}_k^*\|_\mathbf{C}+\|\mathbf{B}\|_2\sqrt{\frac{2\rho}{\alpha}}\|\mathbf{u}_{k-1}-\mathbf{u}_{k-1}^*\|_\mathbf{C}+\sqrt{2\rho}d_k\right],
\end{split}
\end{align}
where $\|\mathbf{A}\|_2$ is the spectral norm, i.e., the largest singular value, of $\mathbf{A}$.
Furthermore, for any positive integer $k$, we have:
\begin{align}
\label{a6}&\|\mathbf{z}_k-\mathbf{z}_k^*\|_2\leq\sqrt{\frac{2}{\alpha\rho}}\|\mathbf{u}_k-\mathbf{u}_k^*\|_\mathbf{C},\\
\label{a7}&\|\boldsymbol{\lambda}_k-\boldsymbol{\lambda}_k^*\|_2\leq\sqrt{2\rho}\|\mathbf{u}_k-\mathbf{u}_k^*\|_\mathbf{C}.
\end{align}
\end{thm}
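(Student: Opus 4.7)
The plan is to dispatch the two easy inequalities \eqref{a6} and \eqref{a7} directly from the definition of the $\mathbf{C}$-norm, and then reduce the $\mathbf{x}_k$ bound to a strong-convexity estimate combined with a triangle-inequality splitting of $\mathbf{z}_{k-1}-\mathbf{z}_k$.

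For \eqref{a7} and \eqref{a6}, I unpack $\|\mathbf{u}_k-\mathbf{u}_k^*\|_\mathbf{C}^2 = \frac{\rho}{2}\|\mathbf{B}(\mathbf{z}_k-\mathbf{z}_k^*)\|_2^2 + \frac{1}{2\rho}\|\boldsymbol{\lambda}_k-\boldsymbol{\lambda}_k^*\|_2^2$. Dropping the (nonnegative) $\mathbf{B}$-term yields $\frac{1}{2\rho}\|\boldsymbol{\lambda}_k-\boldsymbol{\lambda}_k^*\|_2^2\leq\|\mathbf{u}_k-\mathbf{u}_k^*\|_\mathbf{C}^2$, which is \eqref{a7} after taking square roots. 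Dropping the $\boldsymbol{\lambda}$-term and using $\mathbf{B}^\mathsf{T}\mathbf{B}\succeq\alpha\mathbf{I}$ (guaranteed by Assumption 4 via the definition of $\alpha$) gives $\frac{\rho\alpha}{2}\|\mathbf{z}_k-\mathbf{z}_k^*\|_2^2\leq\|\mathbf{u}_k-\mathbf{u}_k^*\|_\mathbf{C}^2$, which is \eqref{a6}.

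For \eqref{a8}, the strategy is to apply strong convexity of $f_k$ (Assumption 2) together with Cauchy--Schwarz to obtain the standard estimate $\widetilde{m}\|\mathbf{x}_k-\mathbf{x}_k^*\|_2 \leq \|\nabla f_k(\mathbf{x}_k)-\nabla f_k(\mathbf{x}_k^*)\|_2$. The two optimality identities \eqref{f_relation} and \eqref{f_kkt} from the proof of Proposition 1 show that this gradient difference equals $-\mathbf{A}^\mathsf{T}\bigl(\boldsymbol{\lambda}_k-\boldsymbol{\lambda}_k^* + \rho\mathbf{B}(\mathbf{z}_{k-1}-\mathbf{z}_k)\bigr)$, so the spectral norm bound and the triangle inequality give
\begin{align*}
\widetilde{m}\|\mathbf{x}_k-\mathbf{x}_k^*\|_2 \leq \|\mathbf{A}\|_2\bigl(\|\boldsymbol{\lambda}_k-\boldsymbol{\lambda}_k^*\|_2 + \rho\|\mathbf{B}\|_2\|\mathbf{z}_{k-1}-\mathbf{z}_k\|_2\bigr).
\end{align*}
The $\boldsymbol{\lambda}$-term is already handled by \eqref{a7}. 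The remaining work is to bound $\|\mathbf{z}_{k-1}-\mathbf{z}_k\|_2$, which I split through the optimal iterates as $(\mathbf{z}_{k-1}-\mathbf{z}_{k-1}^*) + (\mathbf{z}_{k-1}^*-\mathbf{z}_k^*) + (\mathbf{z}_k^*-\mathbf{z}_k)$. The outer two pieces are controlled by \eqref{a6} (applied at indices $k-1$ and $k$), each contributing a factor $\sqrt{2/(\alpha\rho)}$ times the corresponding $\mathbf{C}$-norm.

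The key is how to bound the middle piece $\|\mathbf{z}_{k-1}^*-\mathbf{z}_k^*\|_2$: the first summand of the drift definition \eqref{drift_def} gives exactly $\sqrt{\rho/2}\,\|\mathbf{B}\|_2\|\mathbf{z}_{k-1}^*-\mathbf{z}_k^*\|_2\leq d_k$, so $\rho\|\mathbf{B}\|_2\|\mathbf{z}_{k-1}^*-\mathbf{z}_k^*\|_2\leq\sqrt{2\rho}\,d_k$. Substituting the three pieces back gives the claimed coefficients $\|\mathbf{B}\|_2\sqrt{2\rho/\alpha}$ on $\|\mathbf{u}_k-\mathbf{u}_k^*\|_\mathbf{C}$ and $\|\mathbf{u}_{k-1}-\mathbf{u}_{k-1}^*\|_\mathbf{C}$, with $\sqrt{2\rho}$ on $d_k$, and adding the $\boldsymbol{\lambda}$-contribution from \eqref{a7} produces the extra $\sqrt{2\rho}$ on $\|\mathbf{u}_k-\mathbf{u}_k^*\|_\mathbf{C}$. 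No step is hard; the only design choice is to bound the middle drift piece directly from \eqref{drift_def} rather than via $\|\mathbf{u}_{k-1}^*-\mathbf{u}_k^*\|_\mathbf{C}$, since doing so yields the sharper $\sqrt{2\rho}$ coefficient on $d_k$ advertised in the statement.
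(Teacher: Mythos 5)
Your proposal is correct and follows essentially the same route as the paper's proof: the same $\mathbf{C}$-norm unpacking for \eqref{a6}--\eqref{a7}, the same use of strong convexity with \eqref{f_relation} and \eqref{f_kkt}, the same three-way splitting of $\mathbf{z}_{k-1}-\mathbf{z}_k$ through the optimal iterates, and the same bound on $\|\mathbf{z}_{k-1}^*-\mathbf{z}_k^*\|_2$ read off from the first summand of the drift. The only cosmetic difference is that you cancel a factor of $\|\mathbf{x}_k-\mathbf{x}_k^*\|_2$ to pass through the gradient norm, whereas the paper keeps the inner product and applies Cauchy--Schwarz at the end.
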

\begin{proof}
\eqref{a6} and \eqref{a7} are straightforward. According to the definition of $\|\cdot\|_\mathbf{C}$, we have $\|\mathbf{u}_k-\mathbf{u}_k^*\|_\mathbf{C}^2\geq\frac{1}{2\rho}\|\boldsymbol{\lambda}_k-\boldsymbol{\lambda}_k^*\|_2^2$, which results in \eqref{a7}. Moreover, we note:
\begin{align}
\|\mathbf{Bz}_k-\mathbf{Bz}_k^*\|_2^2=(\mathbf{z}_k-\mathbf{z}_k^*)^\mathsf{T}\mathbf{B}^\mathsf{T}\mathbf{B}(\mathbf{z}_k-\mathbf{z}_k^*)\geq\alpha\|\mathbf{z}_k-\mathbf{z}_k^*\|_2^2,
\end{align}
and
\begin{align}
\|\mathbf{u}_k-\mathbf{u}_k^*\|_\mathbf{C}^2\geq\frac{\rho}{2}\|\mathbf{Bz}_k-\mathbf{Bz}_k^*\|_2^2,
\end{align}
which together lead to \eqref{a6}. Now, we proceed to prove \eqref{a8}. From the strong convexity of $f_k$ (Assumption 2) and equations \eqref{f_relation}, \eqref{f_kkt}, we derive:
\begin{align}
&\widetilde{m}\|\mathbf{x}_k-\mathbf{x}_k^*\|_2^2\\
&\leq(\mathbf{x}_k-\mathbf{x}_k^*)^\mathsf{T}(\nabla f_k(\mathbf{x}_k)-\nabla f_k(\mathbf{x}_k^*))\\
&=(\mathbf{x}_k-\mathbf{x}_k^*)^\mathsf{T}\left[\mathbf{A}^\mathsf{T}(-\boldsymbol{\lambda}_k+\rho\mathbf{B}(\mathbf{z}_k-\mathbf{z}_{k-1}))+\mathbf{A}^\mathsf{T}\boldsymbol{\lambda}_k^*\right]\\
&=(\mathbf{x}_k-\mathbf{x}_k^*)^\mathsf{T}\mathbf{A}^\mathsf{T}[\boldsymbol{\lambda}_k^*-\boldsymbol{\lambda}_k+\rho\mathbf{B}(\mathbf{z}_k-\mathbf{z}_{k-1})]\\
&\leq\|\mathbf{x}_k-\mathbf{x}_k^*\|_2\|\mathbf{A}\|_2(\|\boldsymbol{\lambda}_k-\boldsymbol{\lambda}_k^*\|_2+\rho\|\mathbf{B}\|_2\|\mathbf{z}_k-\mathbf{z}_{k-1}\|_2).
\end{align}
Therefore,
\begin{align}
&\|\mathbf{x}_k-\mathbf{x}_k^*\|_2\\
&\leq\frac{1}{\widetilde{m}}\|\mathbf{A}\|_2(\|\boldsymbol{\lambda}_k-\boldsymbol{\lambda}_k^*\|_2+\rho\|\mathbf{B}\|_2\|\mathbf{z}_k-\mathbf{z}_{k-1}\|_2)\\
&\leq\frac{1}{\widetilde{m}}\|\mathbf{A}\|_2[\|\boldsymbol{\lambda}_k-\boldsymbol{\lambda}_k^*\|_2+\rho\|\mathbf{B}\|_2(\|\mathbf{z}_k-\mathbf{z}_k^*\|_2+\|\mathbf{z}_k^*-\mathbf{z}_{k-1}^*\|_2+\|\mathbf{z}_{k-1}-\mathbf{z}_{k-1}^*\|_2)]
\end{align}
Further exploiting \eqref{a6} (for $k$ and $k-1$) and \eqref{a7}, we obtain:
\begin{align}
\begin{split}\label{a9}
&\|\mathbf{x}_k-\mathbf{x}_k^*\|_2\\
&\leq\frac{1}{\widetilde{m}}\|\mathbf{A}\|_2\Bigg[\left(\sqrt{2\rho}+\|\mathbf{B}\|_2\sqrt{\frac{2\rho}{\alpha}}\right)\|\mathbf{u}_k-\mathbf{u}_k^*\|_\mathbf{C}+\|\mathbf{B}\|_2\sqrt{\frac{2\rho}{\alpha}}\|\mathbf{u}_{k-1}-\mathbf{u}_{k-1}^*\|_\mathbf{C}\\
&~~~~+\rho\|\mathbf{B}\|_2\|\mathbf{z}_k^*-\mathbf{z}_{k-1}^*\|_2\Bigg]
\end{split}
\end{align}
According to the definition of the drift $d_k$ in \eqref{drift_def}, we know that $\|\mathbf{z}_k^*-\mathbf{z}_{k-1}^*\|_2\leq\frac{1}{\|\mathbf{B}\|_2}\sqrt{\frac{2}{\rho}}d_k$. Substituting this relation into \eqref{a9} leads to \eqref{a8}.
\end{proof}
\begin{rem}
Sine $\mathbf{u}_k$ converges Q-linearly to some neighborhood of $\mathbf{u}_k^*$ (Theorem 1), Theorem 2 indicates that $\mathbf{x}_k,\mathbf{z}_k,\boldsymbol{\lambda}_k$ converge R-linearly to some neighborhoods of $\mathbf{x}_k^*,\mathbf{z}_k^*,\boldsymbol{\lambda}_k^*$, respectively. When the dynamic optimization problem \eqref{dynamic_admm} degenerates to its static version, i.e., $f_k$ and $g_k$ does not vary with $k$, Theorem 2 also degenerates to its static counterpart in \cite{deng2016global,shi2014linear}.
\end{rem}
To see the impact of the drift $d_k$ (and thus the difference between the dynamic ADMM and the static ADMM) on the steady state convergence behaviors, we present the following result.
\begin{thm}
Suppose the drift defined in \eqref{drift_def} satisfies $d_k\leq d,\forall k$, for some $d\in\mathbb{R}$. Then, we have:
\begin{align}
\label{b1}&\limsup_{k\rightarrow\infty}\|\mathbf{u}_k-\mathbf{u}_k^*\|_\mathbf{C}\leq\frac{d}{\sqrt{1+\delta}-1},\\
\label{b2}&\limsup_{k\rightarrow\infty}\|\mathbf{x}_k-\mathbf{x}_k^*\|_2\leq\frac{\|\mathbf{A}\|_2}{\widetilde{m}}\left[\frac{\sqrt{2\rho}+\|\mathbf{B}\|_2\sqrt{\frac{8\rho}{\alpha}}}{\sqrt{1+\delta}-1}+\sqrt{2\rho}\right]d,\\
\label{b3}&\limsup_{k\rightarrow\infty}\|\mathbf{z}_k-\mathbf{z}_k^*\|_2\leq\sqrt{\frac{2}{\alpha\rho}}\frac{d}{\sqrt{1+\delta}-1},\\
\label{b4}&\limsup_{k\rightarrow\infty}\|\boldsymbol{\lambda}_k-\boldsymbol{\lambda}_k^*\|_2\leq\sqrt{2\rho}\frac{d}{\sqrt{1+\delta}-1}.
\end{align}
\end{thm}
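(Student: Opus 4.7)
The plan is to treat Theorem 1 as a scalar recursion on the nonnegative sequence $a_k := \|\mathbf{u}_k - \mathbf{u}_k^*\|_\mathbf{C}$ and then push the asymptotic bound through the inequalities in Theorem 2 one by one. Set $r := 1/\sqrt{1+\delta} \in (0,1)$. From Theorem 1, under the uniform drift bound $d_k \leq d$, we have $a_k \leq r\,a_{k-1} + r\,d$ for every $k \geq 2$.

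First I would iterate this linear recursion. Unrolling gives
\begin{align}
a_k \leq r^{k-1} a_1 + r\,d \sum_{j=0}^{k-2} r^{j} \leq r^{k-1} a_1 + \frac{r\,d}{1-r}.
\end{align}
Since $r^{k-1} \to 0$, taking $\limsup_{k\to\infty}$ and simplifying $\frac{r}{1-r} = \frac{1}{\sqrt{1+\delta}-1}$ yields \eqref{b1}. This is the workhorse step; everything else is a corollary.

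Next, \eqref{b3} and \eqref{b4} follow immediately by applying $\limsup$ to the two pointwise inequalities \eqref{a6} and \eqref{a7} from Theorem 2 and plugging in the bound from \eqref{b1}; no further work is required because the constants $\sqrt{2/(\alpha\rho)}$ and $\sqrt{2\rho}$ are deterministic. For \eqref{b2}, I would take $\limsup$ of the right-hand side of \eqref{a8}, using that $\limsup_k \|\mathbf{u}_{k-1}-\mathbf{u}_{k-1}^*\|_\mathbf{C} = \limsup_k \|\mathbf{u}_k-\mathbf{u}_k^*\|_\mathbf{C} \leq d/(\sqrt{1+\delta}-1)$ and that $d_k \leq d$. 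Collecting the two $\mathbf{u}$-terms gives a coefficient $\sqrt{2\rho} + 2\|\mathbf{B}\|_2\sqrt{2\rho/\alpha} = \sqrt{2\rho} + \|\mathbf{B}\|_2\sqrt{8\rho/\alpha}$ on $d/(\sqrt{1+\delta}-1)$, and the residual $\sqrt{2\rho}\,d$ term from the drift contributes the additional $\sqrt{2\rho}\,d$ summand inside the brackets, exactly matching \eqref{b2}.

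The only point that requires a bit of care is the limsup manipulation for \eqref{b2}: the bound in \eqref{a8} involves both $\|\mathbf{u}_k-\mathbf{u}_k^*\|_\mathbf{C}$ and $\|\mathbf{u}_{k-1}-\mathbf{u}_{k-1}^*\|_\mathbf{C}$, so one must use subadditivity of $\limsup$ and the fact that both subsequences share the same $\limsup$; then the two contributions combine cleanly through $\sqrt{2\rho} + \|\mathbf{B}\|_2\sqrt{2\rho/\alpha} + \|\mathbf{B}\|_2\sqrt{2\rho/\alpha} = \sqrt{2\rho} + \|\mathbf{B}\|_2\sqrt{8\rho/\alpha}$. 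There is no deeper obstacle: the whole result is essentially a geometric-series summation of the contraction inequality from Theorem 1, followed by propagation through the Lipschitz-style bounds of Theorem 2.
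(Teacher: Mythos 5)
Your proposal is correct and follows essentially the same route as the paper: the paper also unrolls the contraction from Theorem 1 into a geometric sum (written by multiplying through by $(\sqrt{1+\delta})^k$ and telescoping, which is the same computation as your $r=1/\sqrt{1+\delta}$ recursion) to get \eqref{b1}, and then obtains \eqref{b2}--\eqref{b4} by taking $\limsup$ through \eqref{a6}, \eqref{a7} and \eqref{a8} exactly as you describe, including the combination $\sqrt{2\rho}+2\|\mathbf{B}\|_2\sqrt{2\rho/\alpha}=\sqrt{2\rho}+\|\mathbf{B}\|_2\sqrt{8\rho/\alpha}$.
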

\begin{proof}
According to Theorem 1, we have:
\begin{align}
&\left(\sqrt{1+\delta}\right)^k\|\mathbf{u}_k-\mathbf{u}_k^*\|_\mathbf{C}\leq\left(\sqrt{1+\delta}\right)^{k-1}\|\mathbf{u}_{k-1}-\mathbf{u}_{k-1}^*\|_\mathbf{C}+\left(\sqrt{1+\delta}\right)^{k-1}d_k,\\
&\left(\sqrt{1+\delta}\right)^{k-1}\|\mathbf{u}_{k-1}-\mathbf{u}_{k-1}^*\|_\mathbf{C}\leq\left(\sqrt{1+\delta}\right)^{k-2}\|\mathbf{u}_{k-2}-\mathbf{u}_{k-2}^*\|_\mathbf{C}+\left(\sqrt{1+\delta}\right)^{k-2}d_{k-1},\\
&\cdots\cdots\nonumber\\
&\left(\sqrt{1+\delta}\right)^2\|\mathbf{u}_2-\mathbf{u}_2^*\|_\mathbf{C}\leq\sqrt{1+\delta}\|\mathbf{u}_1-\mathbf{u}_1^*\|_\mathbf{C}+\sqrt{1+\delta}d_2.
\end{align}
Summing them together gives:
\begin{align}
&\left(\sqrt{1+\delta}\right)^k\|\mathbf{u}_k-\mathbf{u}_k^*\|_\mathbf{C}\\
&\leq\sum_{i=1}^{k-1}\left(\sqrt{1+\delta}\right)^id_{i+1}+\sqrt{1+\delta}\|\mathbf{u}_1-\mathbf{u}_1^*\|_\mathbf{C}\\
&\leq d\sqrt{1+\delta}\sum_{i=0}^{k-2}\left(\sqrt{1+\delta}\right)^i+\sqrt{1+\delta}\|\mathbf{u}_1-\mathbf{u}_1^*\|_\mathbf{C}.
\end{align}
Hence,
\begin{align}
\|\mathbf{u}_k-\mathbf{u}_k^*\|_\mathbf{C}\leq\frac{d}{\sqrt{1+\delta}-1}+\frac{\|\mathbf{u}_1-\mathbf{u}_1^*\|_\mathbf{C}}{\left(\sqrt{1+\delta}\right)^{k-1}},
\end{align}
which results in \eqref{b1}. Combining \eqref{b1} with \eqref{a6} and \eqref{a7} immediately leads to \eqref{b3} and \eqref{b4}. In addition, according to \eqref{a8}, we have:
\begin{align}
\begin{split}
&\|\mathbf{x}_k-\mathbf{x}_k^*\|_2\\
&\leq\frac{1}{\widetilde{m}}\|\mathbf{A}\|_2\left[\left(\sqrt{2\rho}+\|\mathbf{B}\|_2\sqrt{\frac{2\rho}{\alpha}}\right)\|\mathbf{u}_k-\mathbf{u}_k^*\|_\mathbf{C}+\|\mathbf{B}\|_2\sqrt{\frac{2\rho}{\alpha}}\|\mathbf{u}_{k-1}-\mathbf{u}_{k-1}^*\|_\mathbf{C}+\sqrt{2\rho}d\right].
\end{split}
\end{align}
Therefore,
\begin{align}
&\limsup_{k\rightarrow\infty}\|\mathbf{x}_k-\mathbf{x}_k^*\|_2\\
&\leq\frac{\|\mathbf{A}\|_2}{\widetilde{m}}\left[\left(\sqrt{2\rho}+\|\mathbf{B}\|_2\sqrt{\frac{8\rho}{\alpha}}\right)\limsup_{k\rightarrow\infty}\|\mathbf{u}_k-\mathbf{u}_k^*\|_\mathbf{C}+\sqrt{2\rho}d\right]\\
&\leq\frac{\|\mathbf{A}\|_2}{\widetilde{m}}\left[\frac{\sqrt{2\rho}+\|\mathbf{B}\|_2\sqrt{\frac{8\rho}{\alpha}}}{\sqrt{1+\delta}-1}+\sqrt{2\rho}\right]d.
\end{align}
\end{proof}

\section{Numerical Examples}
In this section, two numerical examples are presented to validate the effectiveness of the proposed dynamic ADMM algorithm, Algorithm 1. The first example is a dynamic sharing problem and the second one is the dynamic least absolute shrinkage and selection operator (LASSO). Through these two examples, we confirm that the proposed dynamic ADMM algorithm is suitable for not only the dynamic sharing problem \eqref{dynamic_share} (e.g., the first numerical example) but also the general form of dynamic optimization problem \eqref{dynamic_admm} (e.g., the second example, dynamic LASSO, which is not a sharing problem).

\subsection{The Dynamic Sharing Problem}
\subsubsection{Problem Formulation and Algorithm Development}
We first consider the following dynamic sharing problem:
\begin{align}\label{numerical_sharing}
\text{Minimize}_{\mathbf{x}^{(1)},...,\mathbf{x}^{(n)}\in\mathbb{R}^p}~~\sum_{i=1}^n\left(\mathbf{x}^{(i)}-\boldsymbol{\theta}_k^{(i)}\right)^\mathsf{T}\mathbf{\Phi}_k^{(i)}\left(\mathbf{x}^{(i)}-\boldsymbol{\theta}_k^{(i)}\right)+\gamma\left\|\sum_{i=1}^n\mathbf{x}^{(i)}\right\|_1,
\end{align}
where $\boldsymbol{\theta}_k^{(i)}\in\mathbb{R}^p$, $\mathbf{\Phi}_k^{(i)}\in\mathbb{R}^{p\times p}$ positive definite, $\gamma>0$ are given problem data. A motivating application instance of the problem \eqref{numerical_sharing} can be as follows. Suppose there are $n$ subsystems and $p$ quantities (such as data flow in communication networks or currents in power grids) distributed over these subsystems. The amount of the $p$ quantities at subsystem $i$ is described by the vector $\mathbf{x}_i\in\mathbb{R}^p$. Our goal is to estimate the vectors $\mathbf{x}_i,i=1,2,...,n$. The problem data at time $k$ are $\mathbf{\Phi}_k^{(i)},\boldsymbol{\theta}_k^{(i)}$, which vary across time as we keep obtaining new measurements and updating the problem data. We assume that the statistical model of the vectors $\mathbf{x}_i$ is Gaussian so that the first term in \eqref{numerical_sharing} corresponds to the negative log likelihood. Suppose the sum of most quantities across all subsystems cancel out (such as the generation/consumption of power due to the energy conservation rule and the incoming/outgoing current or data flow due to the Kirchhoff's laws) while a few do not cancel out because of abnormality such as leakage. This implies that the sum $\sum_{i=1}^n\mathbf{x}^{(i)}$ should be sparse, i.e., most entries are zero. To incorporate this prior knowledge of sparsity into the estimator, we introduce the $l_1$ regularization term, i.e., the second term in \eqref{numerical_sharing}. Therefore, the estimator is tantamount to the dynamic sharing problem in \eqref{numerical_sharing}.

The problem \eqref{numerical_sharing} is clearly in the form of \eqref{dynamic_share} with:
\begin{align}
&f_k^{(i)}\left(\mathbf{x}^{(i)}\right)=\left(\mathbf{x}^{(i)}-\boldsymbol{\theta}_k^{(i)}\right)^\mathsf{T}\mathbf{\Phi}_k^{(i)}\left(\mathbf{x}^{(i)}-\boldsymbol{\theta}_k^{(i)}\right),\\
&g_k(\mathbf{z})=\gamma\|\mathbf{z}\|_1.
\end{align}
Define:
\begin{align}
\mathbf{x}=\left[
\begin{array}{c}
\mathbf{x}^{(1)}\\
\mathbf{x}^{(2)}\\
\vdots\\
\mathbf{x}^{(n)}
\end{array}
\right],~~\boldsymbol{\theta}_k=\left[
\begin{array}{c}
\boldsymbol{\theta}_k^{(1)}\\
\boldsymbol{\theta}_k^{(2)}\\
\vdots\\
\boldsymbol{\theta}_k^{(n)}
\end{array}
\right],~~\mathbf{\Phi}_k=\left[
\begin{array}{cccc}
\mathbf{\Phi}_k^{(1)}&&&\\
&\mathbf{\Phi}_k^{(2)}&&\\
&&\ddots&\\
&&&\mathbf{\Phi}_k^{(n)}
\end{array}
\right].
\end{align}
Thus, in terms of problem \eqref{dynamic_share_admm}, we have:
\begin{align}
f_k(\mathbf{x})=\left(\mathbf{x}-\boldsymbol{\theta}_k\right)^\mathsf{T}\mathbf{\Phi}_k\left(\mathbf{x}-\boldsymbol{\theta}_k\right).
\end{align}
Applying the dynamic ADMM algorithm, i.e., Algorithm 1, to this dynamic sharing problem, we obtain Algorithm \ref{num_share}. The soft-threshold function $\mathcal{S}$ is defined for $a\in\mathbb{R},\kappa>0$ as follows:
\begin{align}
\mathcal{S}_\kappa(a)=
\begin{cases}
a-\kappa,\text{~~if~~}a>\kappa,\\
0,\text{~~if~~}|a|\leq\kappa,\\
a+\kappa,\text{~~if~~}a<\kappa.
\end{cases}
\end{align}
In \eqref{soft}, an entrywise extension of the soft-threshold function to vector input is used.

\begin{algorithm}[!htbp]
\caption{The dynamic ADMM algorithm for the dynamic sharing problem \eqref{numerical_sharing}}
\begin{algorithmic}[1]\label{num_share}
\STATE \texttt{Initialize $\mathbf{x}_0=\mathbf{0},\mathbf{z}_0=\boldsymbol{\lambda}_0=\mathbf{0},k=0$
\STATE \underline{Repeat}:
\STATE $k\leftarrow k+1$
\STATE Update $\mathbf{x}$ according to:
\begin{align}
\mathbf{x}_k=\left(2\mathbf{\Phi}_k+\rho\mathbf{A}^\mathsf{T}\mathbf{A}\right)^{-1}\left(2\mathbf{\Phi}_k\boldsymbol{\theta}_k-\mathbf{A}^\mathsf{T}\boldsymbol{\lambda}_{k-1}+\rho\mathbf{A}^\mathsf{T}\mathbf{z}_{k-1}\right).
\end{align}
\STATE Update $\mathbf{z}$ according to:
\begin{align}\label{soft}
\mathbf{z}_k=\mathcal{S}_\frac{\gamma}{\rho}\left(\mathbf{Ax}_k+\frac{\boldsymbol{\lambda}_{k-1}}{\rho}\right).
\end{align}
\STATE Update $\boldsymbol{\lambda}$ according to:
\begin{align}
\boldsymbol{\lambda}_k=\boldsymbol{\lambda}_{k-1}+\rho(\mathbf{Ax}_k-\mathbf{z}_k).
\end{align}
}
\end{algorithmic}
\end{algorithm}

\subsubsection{Generation of $\mathbf{\Phi}_k^{(i)}$ and $\boldsymbol{\theta}_k^{(i)}$}

We generate the problem data $\mathbf{\Phi}_k^{(i)}$ and $\boldsymbol{\theta}_k^{(i)}$ recursively as follows. Given $\mathbf{\Phi}_{k-1}^{(i)}~(k\geq 1)$, we first generate $\widetilde{\mathbf{\Phi}}_k^{(i)}$ according to $\widetilde{\mathbf{\Phi}}_k^{(i)}=\mathbf{\Phi}_{k-1}^{(i)}+\eta_k^{(i)}\mathbf{E}_k^{(i)}$, where $\eta_k^{(i)}$ is some small positive number and $\mathbf{E}_k^{(i)}$ is a random symmetric matrix with entries uniformly distributed on $[-1,1]$. Then, we construct the matrix $\mathbf{\Phi}_k^{(i)}$ as:
\begin{align}
\mathbf{\Phi}_k^{(i)}=
\begin{cases}
\widetilde{\Phi}_k^{(i)},~~\text{if}~~\lambda_{\min}\left(\widetilde{\mathbf{\Phi}}_k^{(i)}\right)\geq\epsilon,~\text{i.e.,}~\widetilde{\mathbf{\Phi}}_k^{(i)}\succeq\epsilon\mathbf{I},\\
\widetilde{\Phi}_k^{(i)}+\left[\epsilon-\lambda_{\min}\left(\widetilde{\mathbf{\Phi}}_k^{(i)}\right)\right]\mathbf{I},~~\text{otherwise},
\end{cases}
\end{align}
where $\lambda_{\min}(\cdot)$ denotes the smallest eigenvalue and $\epsilon>0$ is some positive constant. Through this construction, we ensure that $\mathbf{\Phi}_k^{(i)}\succeq\epsilon\mathbf{I},k=1,2,...$. In addition, $\mathbf{\Phi}_0$ is a random symmetric matrix whose entries are uniformly distributed on $[-1,1]$.

Given $\boldsymbol{\theta}_{k-1}^{(i)}~(k\geq1)$, we generate $\boldsymbol{\theta}_k^{(i)}$ according to:
\begin{align}
\boldsymbol{\theta}_k^{(i)}=\boldsymbol{\theta}_{k-1}^{(i)}+\eta_k^{(i)}\mathbf{h}_k^{(i)},
\end{align}
where $\mathbf{h}_k^{(i)}$ is a random $p$-dimensional vector whose entries are uniformly distributed on $[-1,1]$. $\boldsymbol{\theta}_0^{(i)}$ is also a random $p$-dimensional vector with entries uniformly distributed on $[-1,1]$.

\subsubsection{Simulation Results}

\begin{figure}
  \centering
  \includegraphics[scale=.3]{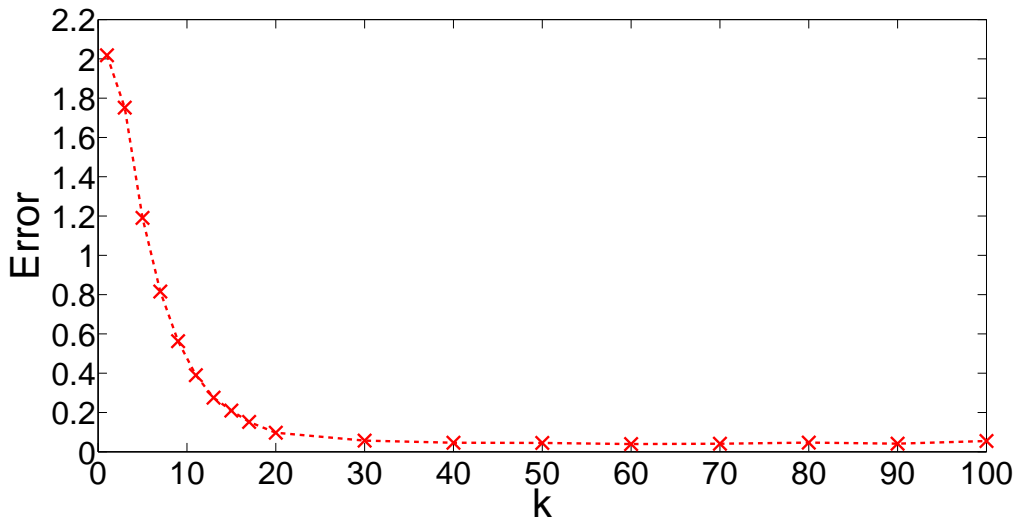}\\
  \caption{The convergence curve of $\|\mathbf{x}_k-\mathbf{x}_k^*\|_2$. $\mathbf{x}_k^*$ is the optimal point of the dynamic sharing problem \eqref{numerical_sharing} at time $k$ computed by an offline optimizor. $\mathbf{x}_k$ is the online solution given by the proposed dynamic ADMM algorithm, i.e., Algorithm \ref{num_share}.}\label{resource}
\end{figure}

In the first simulation, we set the parameters as $\eta=0.2,\epsilon=1,\gamma=1,\rho=1,p=5,n=20$. We use the CVX package \cite{cvx,gb08} to compute the optimal point $\mathbf{x}_k^*$ of the instance of the dynamic sharing problem \eqref{numerical_sharing} at time $k$ in an offline manner. The convergence curve of $\|\mathbf{x}_k-\mathbf{x}_k^*\|_2$ ($\mathbf{x}_k$ is the online solution given by the proposed dynamic ADMM algorithm, i.e., Algorithm \ref{num_share}) is shown in Fig. \ref{resource}. The result is the average of 100 independent trials. We observe that $\mathbf{x}_k$ can converge to some neighborhood of $\mathbf{x}_k^*$ after about 30 iterations. This corroborates the theoretical results (Theorem 2 and Theorem 3) and the effectiveness of the proposed dynamic ADMM algorithm.

\begin{figure}
  \centering
  \includegraphics[scale=.3]{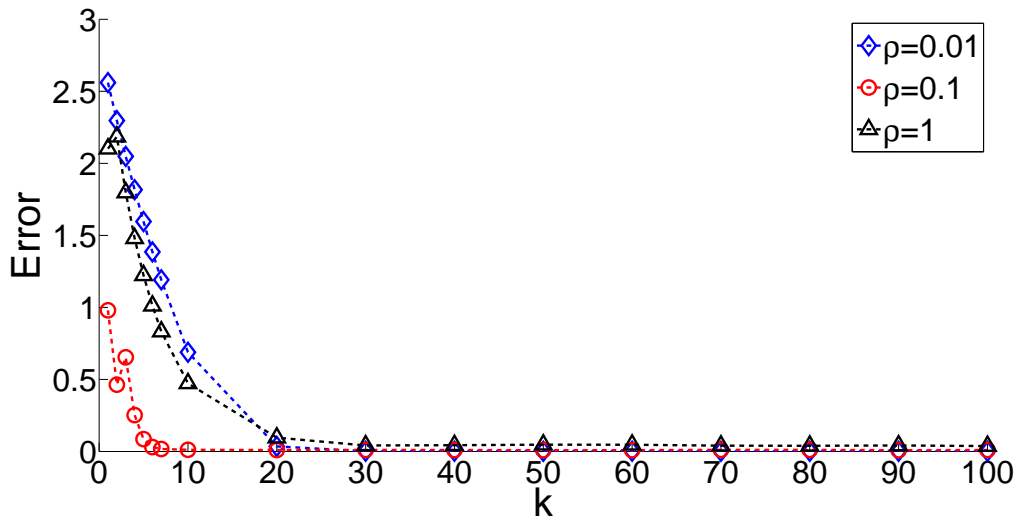}\\
  \caption{The impact of the algorithm parameter $\rho$ on the convergence behaviors ($\|\mathbf{x}_k-\mathbf{x}_k^*\|_2$) of the dynamic ADMM.}\label{resource_rho}
\end{figure}

In the second simulation, we investigate the impact of the algorithm parameter $\rho$ on the convergence performance of the dynamic ADMM. We consider three different values for $\rho$: $0.01,0.1,1$. The corresponding convergence curves ($\|\mathbf{x}_k-\mathbf{x}_k^*\|_2$) are shown in Fig. \ref{resource_rho}. We find that $\rho=0.1$ yields the best convergence performance among the three circumstances. This indicates that the importance of an appropriate value of $\rho$, which should be neither too large nor too small. We note that similar observations have been made in the traditional static ADMM \cite{boyd2011distributed}.

\subsection{Dynamic LASSO}

\subsubsection{Problem Formulation}
Least absolute shrinkage and selection operator (LASSO) is an important and renowned problem in statistics and signal processing. It embodies sparsity-aware linear regression. Here, we consider a dynamic version of the LASSO since the problem data often vary with time in many real-time applications as new measurements arrive sequentially:
\begin{align}\label{lasso_num}
\text{Minimize}_{\mathbf{x}\in\mathbb{R}^p}~~\frac{1}{2}\|\mathbf{F}_k\mathbf{x}-\mathbf{h}_k\|_2^2+\gamma\|\mathbf{x}\|_1,
\end{align}
where $\mathbf{F}_k\in\mathbb{R}^{m\times p}$, $\mathbf{h}_k\in\mathbb{R}^m$ are time-variant problem data and $\gamma>0$ is some positive constant controlling the sparsity of the solution. The problem \eqref{lasso_num} is clearly in the form of \eqref{dynamic_admm}  with $f_k(\mathbf{x})=\frac{1}{2}\|\mathbf{F}_k\mathbf{x}-\mathbf{h}_k\|_2^2$, $g_k(\mathbf{z})=\gamma\|\mathbf{z}\|_1$, $\mathbf{A}=\mathbf{I}$, $\mathbf{B}=-\mathbf{I}$, $\mathbf{c=0}$. Thus, we can apply Algorithm 1 to the problem \eqref{lasso_num}, where both \eqref{x_update} and \eqref{z_update} admit closed-form solutions. Note that the problem \eqref{lasso_num} does not fall into the category of dynamic sharing problem \eqref{dynamic_share} as $f_k(\mathbf{x})$ cannot be decomposed across several parts of $\mathbf{x}$. Our goal in this numerical example is to show that the proposed dynamics algorithm works well for the general dynamic optimization problem \eqref{dynamic_admm}, not just the dynamic sharing problem.

\subsubsection{Generation of $\mathbf{F}_k$ and $\mathbf{h}_k$}
The problem data $\mathbf{F}_k$ and $\mathbf{h}_k$ are generated as follows. Given $\mathbf{F}_{k-1}~(k\geq1)$, we generate $\mathbf{F}_k$ according to:
\begin{align}
\mathbf{F}_k=\mathbf{F}_{k-1}+\eta_k\mathbf{W}_k,
\end{align}
where $\eta_k$ is some small positive constant and $\mathbf{W}_k\in\mathbb{R}^{m\times p}$ is a random matrix with entries uniformly distributed on $[-1,1]$. $\mathbf{F}_0$ is also a random matrix with entries uniformly distributed on $[-1,1]$.

To generate the sequence $\mathbf{h}_k$, we construct an auxiliary ground-truth sequence $\widetilde{\mathbf{x}}_k$ as follows. We randomly select $q$ different numbers $\{j_1,...j_q\}$ from the set $\{1,...,p\}$, where $q\ll p$. Given $\widetilde{\mathbf{x}}_{k-1}~(k\geq1)$, we generate $\widetilde{\mathbf{x}}_k$ based on:
\begin{align}
\widetilde{\mathbf{x}}_k=\widetilde{\mathbf{x}}_{k-1}+\eta_k\mathbf{u}_k,
\end{align}
where $\mathbf{u}_k\in\mathbb{R}^p$ is a random vector with $j_l$-th entry uniformly distributed on $[-1,1]$, $l=1,...,q$ and other entries equal to zero. $\widetilde{\mathbf{x}}_0$ is a random vector whose $j_l$-th entry is uniformly distributed on $[0,1]$, $l=1,...,q$ and other entries are zero. This enforces sparsity of the ground-truth $\widetilde{\mathbf{x}}_k$ to be estimated, which is the underlying hypothesis of the LASSO. With $\widetilde{\mathbf{x}}_k$ and $\mathbf{F}_k$ in hands, we generate $\mathbf{h}_k$ according to:
\begin{align}
\mathbf{h}_k=\mathbf{F}_k\widetilde{\mathbf{x}}_k+\mathbf{v}_k,
\end{align}
where $\mathbf{v}_k\sim\mathcal{N}(\mathbf{0},\sigma^2\mathbf{I})$ is a $m$-dimensional Gaussian random vector.

\subsubsection{Simulation Results}

\begin{figure}
\renewcommand\figurename{\small Fig.}
\centering \vspace*{8pt} \setlength{\baselineskip}{10pt}

\subfigure[Slowly time-variant case, i.e., $\eta=0.01$.]{
\includegraphics[scale = 0.3]{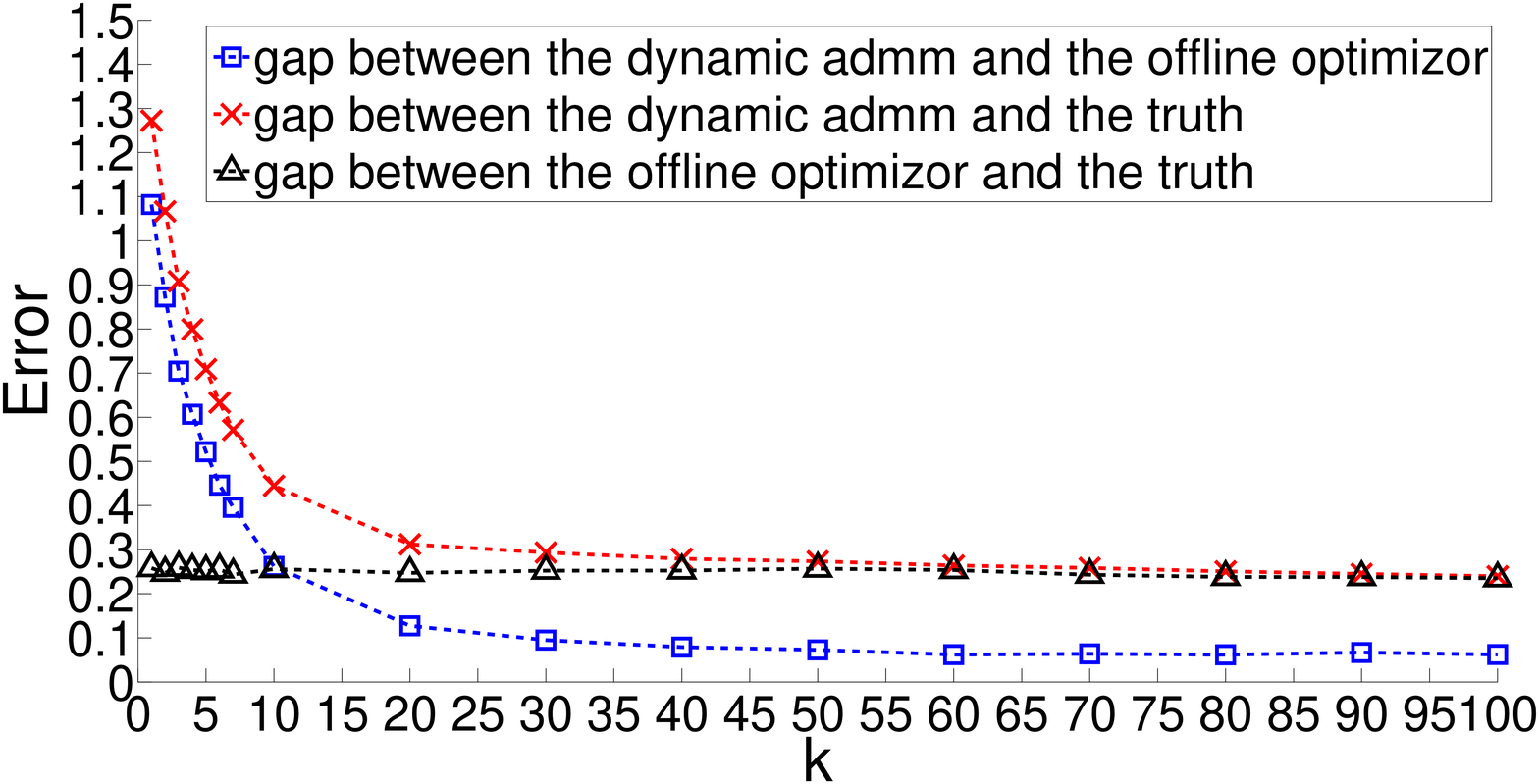}}
\subfigure[Fast time-variant case, i.e., $\eta=0.1$.]{
\includegraphics[scale = 0.3]{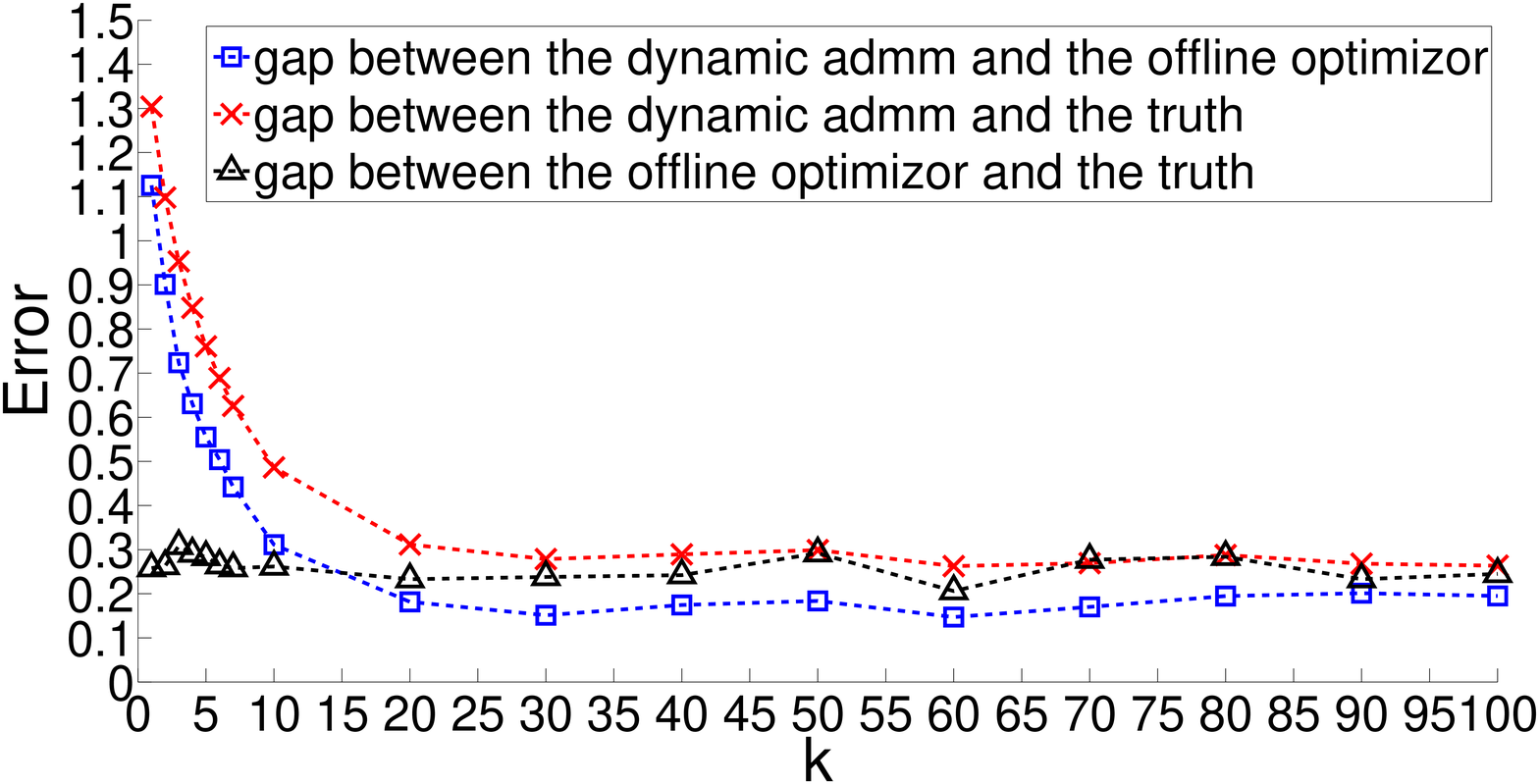}}

\caption{The gaps between the online estimate generated by applying the dynamic ADMM to the dynamic LASSO \eqref{lasso_num}, the estimate given by the offline optimizor through the CVX package (i.e., the optimal point of \eqref{lasso_num}) and the ground-truth: $\|\mathbf{x}_k-\mathbf{x}_k^*\|_2,\|\mathbf{x}_k-\widetilde{\mathbf{x}}_k\|_2$ and $\|\mathbf{x}_k^*-\widetilde{\mathbf{x}}_k\|_2$.}
\label{lasso_error}
\end{figure}

In the simulations, we set the parameters as: $m=10,p=30,q=2,\rho=1,\gamma=0.2,\sigma=0.1$. All results except Fig. \ref{lasso_track} are the average of 100 independent trials. We consider two values, 0.01 and 0.1, for $\eta$, the parameter controlling the variation of the problem data across time. We call $\eta=0.01$ and $\eta=0.1$ the slowly time-variant case and the fast time-variant case, respectively. Denote the online estimate generated by applying the dynamic ADMM to the dynamic LASSO \eqref{lasso_num}, the estimate given by the offline optimizor through the CVX package (i.e., the optimal point of \eqref{lasso_num}) and the ground-truth as $\mathbf{x}_k,\mathbf{x}^*_k$ and $\widetilde{\mathbf{x}}_k$, respectively. The gaps between these three quantities, i.e., $\|\mathbf{x}_k-\mathbf{x}_k^*\|_2,\|\mathbf{x}_k-\widetilde{\mathbf{x}}_k\|_2$ and $\|\mathbf{x}_k^*-\widetilde{\mathbf{x}}_k\|_2$, in the slowly time-variant case and the fast time-variant case are reported in Fig. \ref{lasso_error}-(a) and Fig. \ref{lasso_error}-(b), respectively. A few remarks are in order. First, the solution of the optimizor $\mathbf{x}_k^*$ should be regarded as the benchmark for the dynamic ADMM as the former is the optimal point of \eqref{lasso_num}, or in other words, the best that the dynamic LASSO can achieve. For both slowly and fast time-variant cases, the gaps between the dynamic ADMM and the offline optimizor, i.e., the blue line with square marker, converge to some small values after about 40 iterations. This indicates that the dynamic ADMM can track the optimal point of \eqref{lasso_num} well. Second, the gaps between the dynamic ADMM and the truth (red line with cross markers) as well as the gaps between the offline optimizor and the truth (black line with triangle markers) are similar after some 50 iterations in both slowly and fast time-variant cases. This suggests that in terms of tracking the ground-truth, the dynamic ADMM and the offline optimizor have similar performances while the former has much less computational complexity than the latter. Third, unsurprisingly, comparing \ref{lasso_error}-(a) with \ref{lasso_error}-(b), we observe that the tracking performances of both the dynamic ADMM and the offline optimizor are related to the value of $\eta$: the larger the $\eta$, the more drastically the change of the problem data across time, the poorer the tracking performance.

\begin{figure}
\renewcommand\figurename{\small Fig.}
\centering \vspace*{8pt} \setlength{\baselineskip}{10pt}

\subfigure[Slowly time-variant case, i.e., $\eta=0.01$.]{
\includegraphics[scale = 0.3]{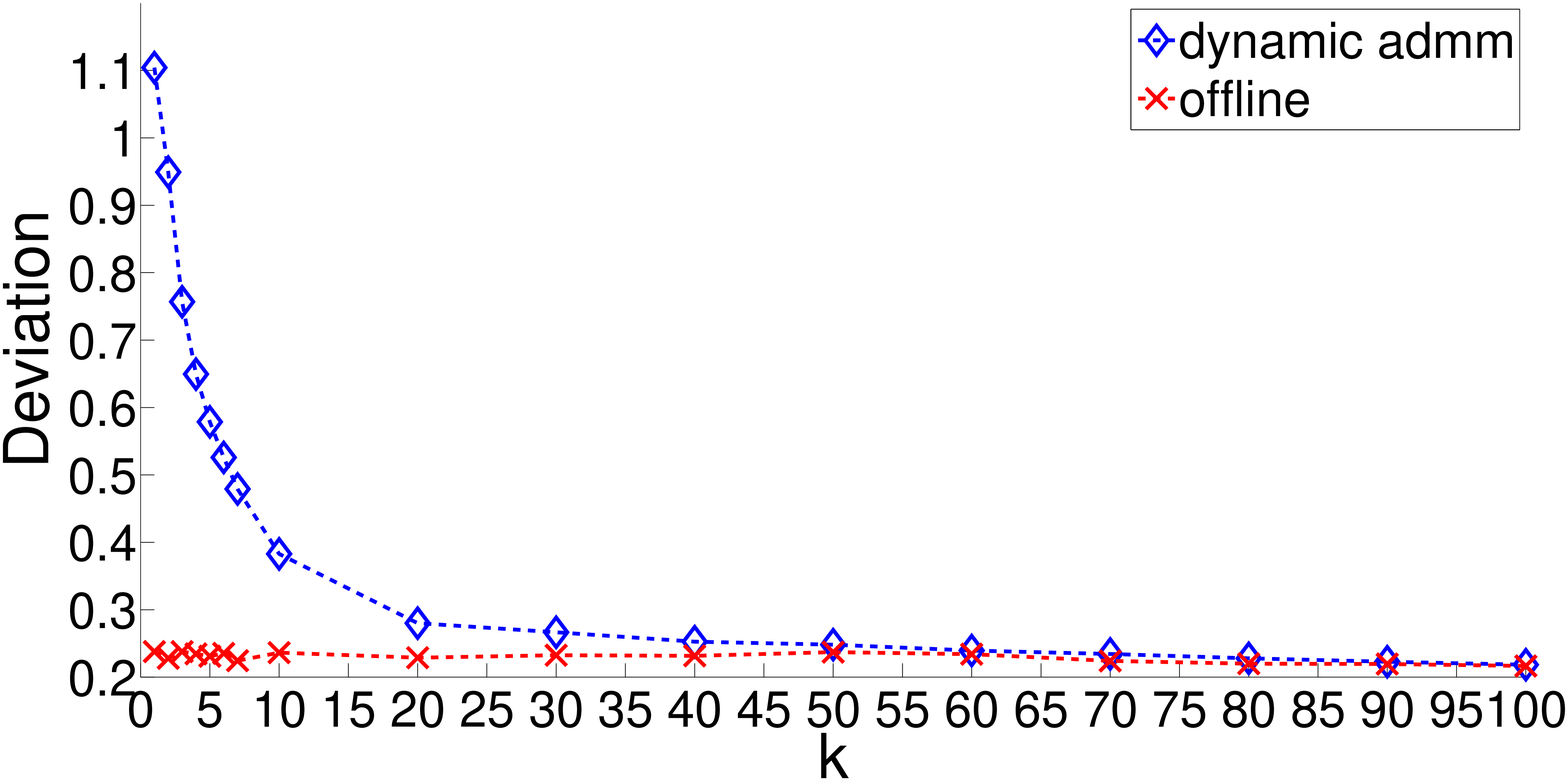}}
\subfigure[Fast time-variant case, i.e., $\eta=0.1$.]{
\includegraphics[scale = 0.3]{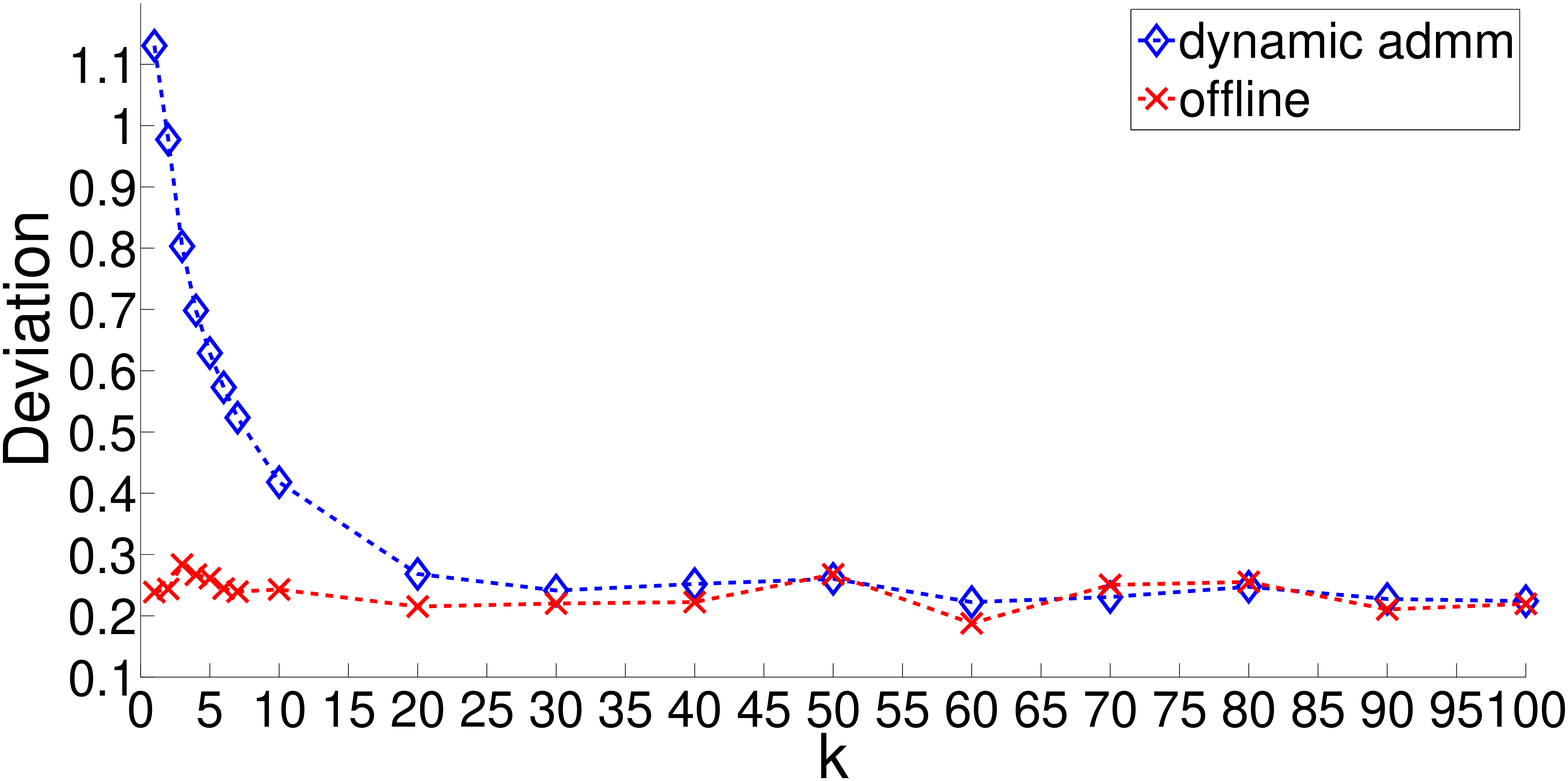}}

\caption{$\|\check{\mathbf{x}}_k\|_2$ and $\|\check{\mathbf{x}}_k^*\|_2$, the deviations of the dynamic ADMM and the offline optimizor from the true sparsity pattern.}
\label{lasso_zero}
\end{figure}

Since the ground-truth $\widetilde{\mathbf{x}}_k$ is sparse and the aim of LASSO is to promote sparsity, a critical performance metric of a solution is how well can it recover the true sparsity pattern. To this end, we compute $\|\check{\mathbf{x}}_k\|_2$ and $\|\check{\mathbf{x}}_k^*\|_2$. Here $\check{\mathbf{x}}_k$ denotes the subvector of $\mathbf{x}_k$ composed of those entries at positions $\{1,...,p\}\backslash\{j_1,...,j_q\}$, i.e., the positions at which the ground-truth is identically zero. Similar definition holds for $\check{\mathbf{x}}_k^*$. $\|\check{\mathbf{x}}_k\|_2$ and $\|\check{\mathbf{x}}_k^*\|_2$ characterize the deviations of the dynamic ADMM ($\mathbf{x}_k$) and the offline optimizor ($\mathbf{x}_k^*$) from the true sparsity pattern. The convergence curves of $\|\check{\mathbf{x}}_k\|_2$ and $\|\check{\mathbf{x}}_k^*\|_2$ are illustrated in Fig. \ref{lasso_zero}-(a) and Fig. \ref{lasso_zero}-(b) for the slowly time-variant case and the fast time-variant case, respectively. We remark that in both cases, after some 50 iterations, the sparsity pattern deviation of the dynamic ADMM is close to that of the offline optimizor. This demonstrates that the dynamic ADMM has the same capability of identifying the true sparsity pattern as the offline optimizor does while the former enjoys significant computational advantage over the latter.

\begin{figure}
  \centering
  \includegraphics[scale=.3]{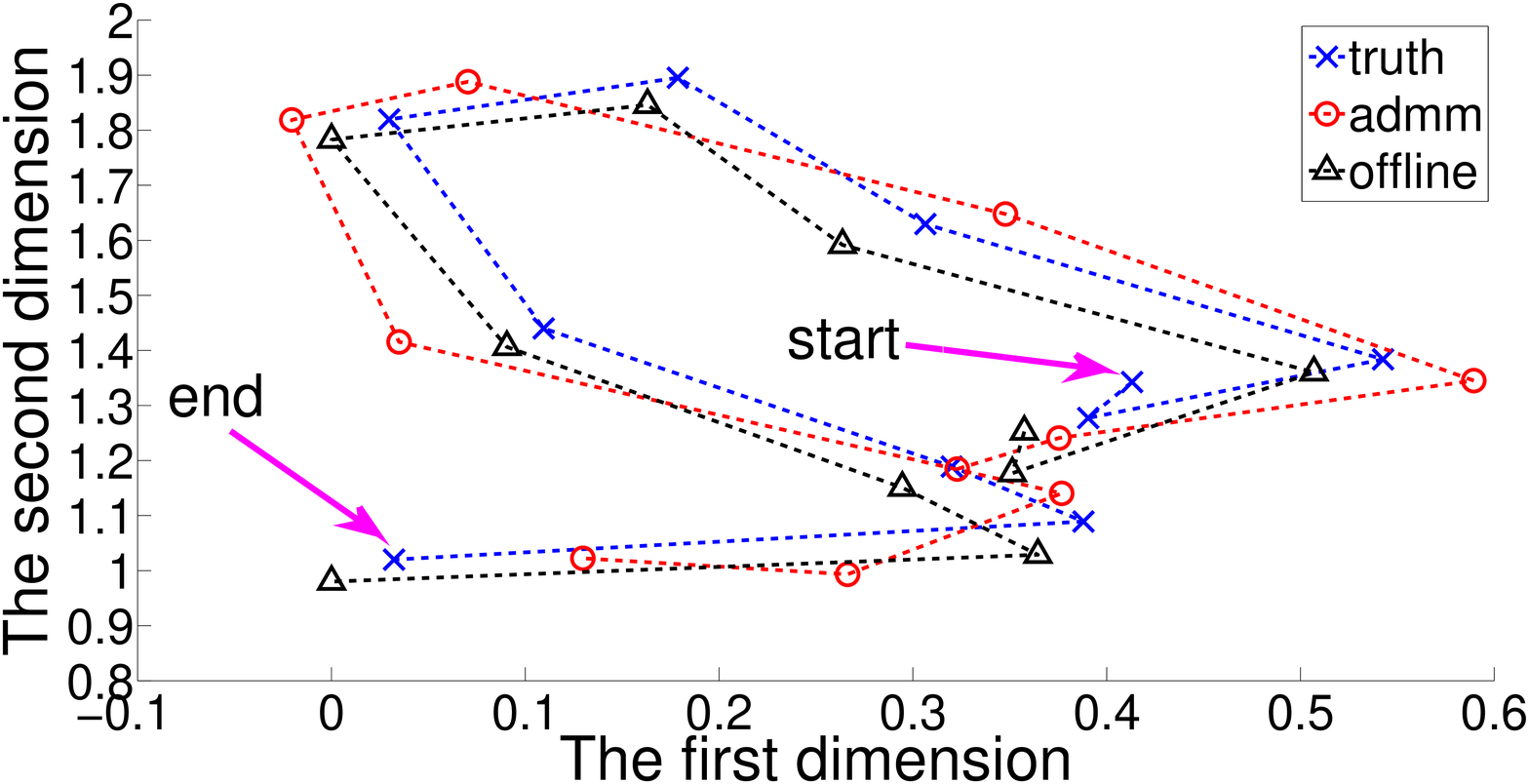}\\
  \caption{The trajectories of the two nonzero dimensions (i.e., $i_1,i_2$, corresponding to the horizontal axis and the vertical axis, respectively) of the dynamic ADMM, the offline optimizor and the ground-truth in one trial of the fast time-variant case}\label{lasso_track}
\end{figure}

Lastly, a more palpable result of the tracking performance is shown in Fig. \ref{lasso_track}, in which the trajectories of the two nonzero dimensions (i.e., $i_1,i_2$, corresponding to the horizontal axis and the vertical axis, respectively) of the dynamic ADMM, the offline optimizor and the ground-truth in one trial of the fast time-variant case are shown. The starting point corresponds to $k=10$ and the time gap between two adjacent points is 10. We observe that the dynamic ADMM can track the truth well. The tracking performance of the offline optimizor is somewhat better, but at the expense of its heavy or even intractable computational burden in many real-time applications.

\section{Conclusion}

In this paper, motivated by the dynamic sharing problem, we propose and study a dynamic ADMM algorithm, which can adapt to the time-varying optimization problems in an online manner. Theoretical analysis is presented to show that the dynamic ADMM converges linearly to some neighborhood of the optimal point. The size of the neighborhood depends on the inherent evolution speed, i.e., the drift, of the dynamic optimization problem across time: the more drastically the problem evolves, the bigger the size of the neighborhood. The impact of the drift on the steady state convergence behaviors of the dynamic ADMM is also investigated. Two numerical examples, namely a dynamic sharing problem and the dynamic LASSO, are presented to corroborate the effectiveness of the dynamic ADMM. We remark that the dynamic ADMM can track the time-varying optimal points quickly and accurately. For the dynamic LASSO, the dynamic ADMM has competitive performance compared to the benchmark offline optimizor while the former possesses significant computational advantage over the latter.

\bibliography{mybib}{}
\bibliographystyle{ieeetr}

\end{document}